\numberwithin{equation}{section}
\newcommand{\rhom}{R{\mathcal{H}}om}
\newcommand{\BDC}{{\mathbf{D}}^{\mathrm{b}}}
\newcommand{\shom}{{\mathcal{H}}om}
\newcommand{\CC}{\mathbb{C}}
\newcommand{\RR}{\mathbb{R}}
\newcommand{\QQ}{\mathbb{Q}}
\newcommand{\ZZ}{\mathbb{Z}}
\newcommand{\D}{\mathcal{D}}
\newcommand{\LL}{\mathcal{L}}
\newcommand{\M}{\mathcal{M}}
\newcommand{\sho}{\mathcal{O}}
\newcommand{\CH}{\mathcal{H}}
\newcommand{\Lin}{{\rm Lin}}
\newcommand{\rd}{{\rm rd}}
\newcommand{\an}{{\rm an}}
\renewcommand{\dim}{{\rm dim}}
\newcommand{\Vol}{{\rm Vol}}
\newcommand{\e}{\varepsilon}
\newcommand{\Aff}{{\rm Aff}}
\newcommand{\Int}{{\rm Int}}
\newcommand{\tl}[1]{\widetilde{#1}}
\newcommand{\simto}{\overset{\sim}{\longrightarrow}}
\newfont{\bg}{cmr10 scaled\magstep4}
\newcommand{\ubigzero}{\hbox{\bg 0}}
\newtheorem{definition}{Definition}[section]
\newtheorem{theorem}[definition]{Theorem}
\newtheorem{lemma}[definition]{Lemma}
\newtheorem{corollary}[definition]{Corollary}
\newtheorem{remark}[definition]{Remark}
\title{Monodromies at infinity of 
confluent $A$-hypergeometric functions 
\footnote{{\bf 
2010 Mathematics Subject Classification: 
}14M25, 
32S40, 32S60, 
33C70, 35A27}}
\author{Kana ANDO 
\footnote{Institute of 
Mathematics, University  of 
Tsukuba, 1-1-1, Tennodai, 
Tsukuba, Ibaraki, 305-8571, Japan. 
E-mail: ando@graduate.chiba-u.jp}, 
Alexander ESTEROV 
\footnote{ National Research University 
Higher School of Economics \newline Faculty 
of Mathematics NRU HSE, 7 Vavilova 117312 
Moscow, Russia. 
E-mail: aesterov@hse.ru. \newline 
This study (research grant No 14-01-0152) is supported by The National Research University--Higher School of Economics' Academic Fund Program in 2014/2015. Partially supported by RFBR 
grants 13-01-00755 and 12-01-31233, MESRF grant MK-6223.2012.1, 
and the Dynasty Foundation fellowship. } 
and Kiyoshi TAKEUCHI 
\footnote{Institute of Mathematics, University  of 
Tsukuba, 1-1-1, Tennodai, 
Tsukuba, Ibaraki, 305-8571, Japan. 
E-mail: takemicro@nifty.com } }
\date{}
\begin{document}

\maketitle

\begin{abstract}
We study the monodromies at infinity of 
confluent $A$-hypergeometric functions 
introduced by Adolphson \cite{A}. In particular, 
we extend the result of \cite{T-1} for non-confluent 
$A$-hypergeometric functions to 
the confluent case. The integral representation 
by rapid decay homology cycles 
proved in \cite{E-T-2} will play a central 
role in the proof. 
\end{abstract}

\section{Introduction}\label{sec:1}

The theory of $A$-hypergeometric systems 
introduced by Gelfand-Kapranov-Zelevinsky 
\cite{G-K-Z-1} is a vast generalization 
of that of classical hypergeometric differential 
equations. We call their holomorphic solutions 
$A$-hypergeometric functions. As in the classical case, 
$A$-hypergeometric functions 
admit $\Gamma$-series expansions 
(see \cite{G-K-Z-1}, \cite{S-S-T} etc.) and integral 
representations (\cite{G-K-Z-2}). Moreover 
they have deep connections with 
many other fields of mathematics, such as 
toric varieties, projective duality, 
period integrals, mirror symmetry, 
commutative algebra, 
enumerative algebraic geometry and 
combinatorics. Also from the viewpoint of 
the $\D$-module theory (see  
\cite{H-T-T} etc.), 
$A$-hypergeometric $\D$-modules were very 
elegantly constructed 
in \cite{G-K-Z-2}. For the recent 
development of this theory see \cite{S-S-T}, 
\cite{S-W} etc. In \cite{B}, \cite{B-H}, \cite{G-K-Z-2}, 
\cite{Horja}, \cite{T-1} etc. the 
monodromies of their $A$-hypergeometric functions 
were studied. In particular, in \cite{T-1} 
the third author obtained a formula 
for their monodromies at infinity. 
The aim of this paper is to generalize it 
to the confluent $A$-hypergeometric functions 
introduced by Adolphson \cite{A}. 
In the confluent case, by the lack of
the integral representation, almost nothing 
was known about the global property of the 
confluent $A$-hypergeometric functions 
before \cite{E-T-2}. Recently in \cite{E-T-2} 
we established their integral representation  
by using Hien's theory of rapid decay 
homology groups invented 
in \cite{H-1} and \cite{H-2} 
and obtained a formula for their  
asymptotic expansion at infinity 
(see also 
Hien-Roucairol \cite{H-R}, 
Saito \cite{S1} and 
Schulze-Walther \cite{SW1}, \cite{SW2} 
for related results). We will use 
this development to prove our main theorem. 

\medskip 
In order to introduce our result, first we 
recall the definition 
of Adolphson's confluent $A$-hypergeometric systems 
in \cite{A}. In this paper, 
we essentially follow the terminology of \cite{H-T-T}. 
Let $A=\{ a(1), a(2), \ldots , 
a(N)\} \subset \ZZ^{n}$ be a finite subset 
of the lattice $\ZZ^{n}$. 
As in \cite{G-K-Z-1} and \cite{G-K-Z-2} 
assume that $A$ generates $\ZZ^{n}$. 
We denote by $\Delta$ the convex 
hull ${\rm conv} (A \cup \{ 0\})$ 
of $A \cup \{ 0\}$ in $\RR^{n}$. 
By our assumption $\Delta$ is an $n$-dimensional 
polytope. Let $c=(c_1, \ldots, c_n) \in 
\CC^{n}$ be a parameter vector. 
We identify the set $A$ with the 
$n \times N$ integer matrix
\begin{equation}
A:=\begin{pmatrix}
 {^t a(1)} & {^t a(2)} & \cdots & 
{^t a(N)} \end{pmatrix}=
(a_{i,j})_{1 \leq i \leq n, 1 \leq j \leq N}
 \in M(n, N, \ZZ)
\end{equation}
whose $j$-th column is ${^t a(j)}$.
Then Adolphson's confluent $A$-hypergeometric 
system on $X= \CC^A=\CC_z^{N}$ 
associated with the parameter vector 
$c=(c_1, \ldots, c_n) \in \CC^{n}$ is 
\begin{gather}
\left(\sum_{j=1}^{N} a_{i,j} 
z_j\frac{\partial}{\partial 
z_j}+c_i \right)
u(z)=0 \hspace{5mm} (1 \leq i \leq n), \\
\left\{ \prod_{\mu_j>0} \left(\frac{\partial}{\partial 
z_j}\right)^{\mu_j} -\prod_{\mu_j<0} 
\left(\frac{\partial}{\partial 
z_j}\right)^{-\mu_j} \right\} u(z)=0 
\hspace{5mm} (\mu \in {\rm Ker} A \cap 
\ZZ^{N}). 
\end{gather} 
This system was introduced first 
by Gelfand-Kapranov-Zelevinsky 
\cite{G-K-Z-1} when there exists 
a linear functional $l: \RR^n \longrightarrow \RR$ 
such that $l( \ZZ^n)= \ZZ$ and $A \subset l^{-1}(1)$. 
In such a case, Hotta \cite{Hotta} proved that 
it is regular holonomic i.e. non-confluent. 
This is the reason why we call the above 
Adolphson's generalization a confluent 
$A$-hypergeometric system. 
Let $D(X)$ be the Weyl algebra over $X$ 
and consider the differential operators 
\begin{eqnarray}
Z_{i,c} & := & \sum_{j=1}^{N} a_{ij} z_j
\frac{\partial}{\partial z_j}+c_i 
\hspace{5mm}(1\leq i\leq n),\\
\square_{\mu} 
& := & \prod_{\mu_j>0}\left(\frac{\partial}{\partial 
z_j}\right)^{\mu_j} -\prod_{\mu_j<0} \left(
\frac{\partial}{\partial 
z_j}\right)^{-\mu_j}\hspace{5mm} ( \mu 
\in {\rm Ker} A \cap 
\ZZ^{N})
\end{eqnarray}
in it. Then the above system is 
naturally identified with 
the left $D(X)$-module 
\begin{equation}
M_{A, c}=
D(X) / \left(\sum_{1 \leq i \leq n} 
D(X) Z_{i,c} + \sum_{\mu \in {\rm Ker} A 
\cap \ZZ^{N}} D(X) \square_{\mu} \right). 
\end{equation} 
Let $\D_{X}$ be the sheaf 
of differential operators 
over the ``algebraic variety" $X$ and 
define a coherent $\D_{X}$-module by 
\begin{equation}
\M_{A, c}=
\D_{X} / \left(\sum_{1 \leq i \leq n} 
\D_{X} Z_{i,c} + \sum_{\mu \in {\rm Ker} A 
\cap \ZZ^{N} } \D_{X} \square_{\mu}\right). 
\end{equation}
Then $\M_{A, c}$ is the localization of 
the left $D(X)$-module $M_{A, c}$ 
(see \cite[Proposition 1.4.4 (ii)]{H-T-T} etc.). 
In \cite{A} Adolphson proved that $\M_{A, c}$ 
is holonomic. In fact, he proved the 
following more precise result. 

\begin{definition}\label{AND} 
(\cite[page 274]{A}, see also 
\cite{Oka} etc.) For $z \in X=\CC^A$ we 
say that the Laurent polynomial 
$h_z(x)= \sum_{j=1}^N z_jx^{a(j)}$ is 
non-degenerate if for any face $\Gamma$ of 
$\Delta$ not containing the origin 
$0 \in \RR^n$ we have 
\begin{equation}\label{E-111} 
\left\{ x \in T=(\CC^*)^n \ | \ h_z^{\Gamma}(x) = 
\frac{\partial h_z^{\Gamma}}{\partial x_1}(x)
= \cdots \cdots = 
\frac{\partial h_z^{\Gamma}}{\partial x_n}
(x)=0 \right\} = \emptyset, 
\end{equation}
where we set $h_z^{\Gamma}(x)=
\sum_{j:a(j) \in \Gamma} z_jx^{a(j)}$. 
\end{definition} 
Let $\Omega \subset X$ 
be the Zariski open subset 
of $X$ consisting of $z \in X= \CC^A$ 
such that the Laurent polynomial 
$h_z(x)= \sum_{j=1}^N z_jx^{a(j)}$ 
is non-degenerate. 
Then Adolphson's result 
\cite[Lemma 3.3]{A} asserts 
that the holonomic $\D_X$-module 
$\M_{A, c}$ is an integrable 
connection on $\Omega$. Namely 
on $\Omega \subset X$ the 
characteristic variety of $\M_{A, c}$ 
is contained in the zero section 
of the cotangent bundle $T^*\Omega$. 
Now let $X^{\an}$ (resp. $\Omega^{\an}$) 
be the underlying 
complex analytic manifold 
of $X$ (resp. $\Omega$) and consider 
the holomorphic solution complex 
${\rm Sol}_X(\M_{A, c}) \in \BDC (X^{\an})$ 
of $\M_{A, c}$ defined by 
\begin{equation}
{\rm Sol}_X (\M_{A, c})=\rhom_{\D_{X^{\an}}}
((\M_{A, c})^{\an}, \sho_{X^{\an}})  
\end{equation}
(see \cite{H-T-T} etc. for the details). 
Then the above Adolphson's result implies 
that ${\rm Sol}_X(\M_{A, c})$ is a local 
system on $\Omega^{\an}$. 
We call the sections of this local system 
\begin{equation}
H^0{\rm Sol}_X(\M_{A, c}
)_{\Omega^{\an}} = \shom_{\D_{X^{\an}}}
((\M_{A, c})^{\an}, \sho_{X^{\an}}
)_{\Omega^{\an}}  
\end{equation} 
confluent $A$-hypergeometric functions. 
Moreover Adolphson proved 
the following fundamental result. 
Let $\Vol_{\ZZ}(\Delta ) \in \ZZ$ be the 
normalized $n$-dimensional 
volume of $\Delta$ i.e. the $n!$ times of the usual 
one $\Vol (\Delta ) \in \QQ$ 
with respect to the lattice 
$\ZZ^n \subset \RR^n$. 

\begin{definition}\label{NRC} 
(Gelfand-Kapranov-Zelevinsky 
\cite[page 262]{G-K-Z-2}) 
For a face $\Gamma$ of $\Delta$ 
containing the origin $0 \in \RR^n$ we 
denote by $\Lin (\Gamma ) 
\simeq \CC^{\dim \Gamma} \subset \CC^n$ 
the $\CC$-linear span of $\Gamma$. 
Then we say that the parameter vector 
$c \in \CC^n$ is non-resonant if 
for any face $\Gamma$ of $\Delta$ 
of codimension $1$ such that 
$0 \in \Gamma$  
we have $c \notin \{ \ZZ^n+ \Lin (\Gamma ) \}$. 
\end{definition} 

\begin{theorem}\label{ADL} 
(Adolphson \cite[Corollary 5.20]{A}) 
Assume that the parameter vector 
$c \in \CC^n$ is non-resonant. Then 
the rank of the local system 
$\shom_{\D_{X^{\an}}}
((\M_{A, c})^{\an}, \sho_{X^{\an}}
)_{\Omega^{\an}}$ on 
$\Omega^{\an}$ is equal to $\Vol_{\ZZ}(\Delta ) \in \ZZ$. 
\end{theorem}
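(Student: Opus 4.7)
The plan is to compute the stalk dimension of the solution local system at a generic point $z_0 \in \Omega^{\an}$. Since Adolphson has already shown that $\M_{A, c}$ is an integrable connection on $\Omega$, the rank of this local system at $z_0$ coincides with the $\sho_{X, z_0}$-rank of $\M_{A, c}$. Hence it suffices to exhibit $\Vol_{\ZZ}(\Delta)$ linearly independent holomorphic solutions in a neighborhood of $z_0$ and to produce a matching upper bound.

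For the lower bound I would invoke the integral representation established in \cite{E-T-2}: to each class $\gamma$ in the rapid decay homology group $H^{\rd}_n(T;\, \nabla_z)$ of the twisted meromorphic connection $\nabla_z = d + dh_z - \sum_{i=1}^n c_i \frac{dx_i}{x_i}$ on the torus $T = (\CC^*)^n$, one attaches a holomorphic function
\begin{equation}
u_\gamma(z) \ = \ \int_\gamma e^{h_z(x)} \, x_1^{c_1} \cdots x_n^{c_n} \, \frac{dx_1 \wedge \cdots \wedge dx_n}{x_1 \cdots x_n},
\end{equation}
which a direct computation shows is annihilated by every $Z_{i, c}$ and $\square_{\mu}$. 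For $z_0 \in \Omega$ the Laurent polynomial $h_{z_0}$ is non-degenerate, so a Morse-theoretic computation on a smooth projective toric compactification of $T$ refining the normal fan of $\Delta$ identifies $\dim H^{\rd}_n(T;\, \nabla_{z_0})$ with the number of critical points of a generic Laurent polynomial with Newton polytope $\Delta$; by the Kouchnirenko--Bernstein--Khovanskii formula this number equals $\Vol_{\ZZ}(\Delta)$.

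For the upper bound I would run a Koszul-type argument. The Euler operators $Z_{1,c}, \ldots, Z_{n, c}$ act on the toric $\D_X$-module $\D_X / \sum_\mu \D_X \square_\mu$, and the non-resonance hypothesis in Definition \ref{NRC} is precisely what is needed for this sequence to be regular, i.e.\ for the corresponding Koszul complex to be exact outside the top degree. Restricting to $\Omega$, the top cohomology becomes a locally free $\sho_\Omega$-module whose rank can be read off from the $T$-character decomposition of the associated toric ring $\CC[\NN A]$ and is once again $\Vol_{\ZZ}(\Delta)$.

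The main obstacle is controlling what happens when the non-resonance condition fails at a codimension-$1$ face $\Gamma$ through the origin. On the analytic side, such a $\Gamma$ corresponds to a boundary divisor of a toric compactification of $T$ along which $\nabla_z$ has a resonant formal type, so the pairing between solutions $u_\gamma$ and rapid decay cycles degenerates and the local system can lose rank. On the algebraic side, the same phenomenon causes the Euler sequence to fail regularity and introduces extra Koszul cohomology. Aligning these two degeneracy loci, and thereby showing that the obstruction is controlled by exactly the set $\bigcup_{0 \in \Gamma,\, \codim \Gamma = 1} (\ZZ^n + \Lin(\Gamma))$, is the delicate combinatorial core of the proof.
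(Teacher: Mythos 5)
This statement is not proved in the paper; it is quoted verbatim from Adolphson \cite[Corollary 5.20]{A}. Adolphson's argument is purely commutative-algebraic: he identifies the fibre of $\M_{A,c}$ at $z^{(0)} \in \Omega$ with $R/\sum_i D_{i,c,z^{(0)}}R$ for $R=\CC[x^{a(1)},\ldots,x^{a(N)}]$, proves (using non-resonance) that this quotient is unchanged when $R$ is replaced by the \emph{saturation} $\widehat R=\CC[C(\Delta)\cap\ZZ^n]$, and then uses that $\widehat R$ is Cohen--Macaulay (Hochster) to conclude the Koszul complex on the $D_{i,c,z^{(0)}}$ is a resolution, whence the dimension is the multiplicity $\Vol_\ZZ(\Delta)$.

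Your proposal departs from this in a way that introduces a circularity. The lower bound you build on the integral representation and the isomorphism $\CH_n^{\rd}\simeq\shom_{\D_{X^{\an}}}((\M_{A,c})^{\an},\sho_{X^{\an}})$ from \cite{E-T-2}; but \cite{E-T-2} is twenty years downstream of \cite{A} and its Theorem 4.5 (restated here as Theorem \ref{INR}) \emph{uses} Adolphson's rank count as an input: the period map is shown to be injective and then the two sides are matched by comparing dimensions, one of which is precisely Theorem \ref{ADL}. So invoking \cite{E-T-2} to reprove Theorem \ref{ADL} runs in a circle.

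Your upper bound also has a gap as written. You propose to read the rank off a Koszul complex on the Euler operators over ``the toric ring $\CC[\NN A]$''. But $\CC[\NN A]$ is not Cohen--Macaulay in general, and without Cohen--Macaulayness the Koszul complex has higher homology; worse, by Serre's multiplicity formula the Euler characteristic $\sum(-1)^i\dim H_i$ still equals $\Vol_\ZZ(\Delta)$, so the failure of CM makes $\dim H_0$ \emph{at least} $\Vol_\ZZ(\Delta)$, i.e.\ the Koszul argument over $\CC[\NN A]$ gives a \emph{lower} bound, not the upper bound you want. The essential step you are missing is Adolphson's passage to the normal semigroup ring $\widehat R=\CC[C(\Delta)\cap\ZZ^n]$: non-resonance forces $R/\sum D_{i,c,z^{(0)}}R \simto \widehat R/\sum D_{i,c,z^{(0)}}\widehat R$, and it is the Cohen--Macaulayness of $\widehat R$ (not of $\CC[\NN A]$) that both equalizes the Koszul computation and makes the count come out to $\Vol_\ZZ(\Delta)$ on the nose. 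Once that is in place there is no need for a separate lower bound from integration, and the proof becomes self-contained in Adolphson's framework.
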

This is a generalization of 
the famous result of Gelfand-Kapranov-Zelevinsky 
in \cite{G-K-Z-1} to the confluent case. 

\medskip 
Now let us introduce our main result. 
Fix $1 \leq j_0 \leq N$ and let $\mathbb{L} \simeq 
\CC$ be a complex line in $X= \CC^N$ parallel to 
the $j_0$-th axis of $\CC^N$ and satisfying the 
condition 
\begin{equation}
\#  \{ \mathbb{L} \cap ( X \setminus \Omega ) \} 
< + \infty. 
\end{equation}
Our result will not depend on the choice of 
such $\mathbb{L} \simeq 
\CC$. For $R>0$ such that 
$\mathbb{L} \cap ( X \setminus \Omega ) 
\subset \{ z \in \mathbb{L} \simeq \CC \ | \ |z|<R \}$ 
we define a circle $C \subset \mathbb{L} \simeq 
\CC$ in $\mathbb{L} \simeq \CC$ by 
$C= \{ z \in \mathbb{L} \simeq 
\CC \ | \ |z|=R \}$. We denote 
the characteristic polynomial of the 
monodromy of the confluent $A$-hypergeometric 
functions along $C$ by $\lambda_{j_0}^{\infty}(t) 
\in \CC [t]$. Then $\lambda_{j_0}^{\infty}(t)$ 
does not depend on the choice of the radius 
$R>0$. Note also that if 
the parameter vector $c \in \CC^n$ is 
non-resonant the degree of  
$ \lambda_{j_0}^{\infty}(t)$ 
is $\Vol_{\ZZ}( \Delta )$. Our main aim here 
is to give an explicit formula for 
$ \lambda_{j_0}^{\infty}(t)$ which generalizes 
the one in \cite{T-1}. Let 
$\Delta_1, \ldots, \Delta_r \prec \Delta$ be the 
facets of the $n$-dimensional polytope 
$\Delta \subset \RR_v^n$ such that 
$a(j_0) \in \Delta_i$ and $0 \notin \Delta_i$. 
For $1 \leq i \leq r$ let 
$\Gamma_{i1}, \Gamma_{i2}, \ldots, 
\Gamma_{i m_i} \prec \Delta_i$ be the facets 
of $\Delta_i$ such that $a(j_0) \notin \Gamma_{ij}$. 
We set 
\begin{equation}
\widehat{\Gamma_{ij}} := 
{\rm conv} ( \{ 0 \} \cup \Gamma_{ij}), 
\qquad \widehat{\Delta_i} := 
{\rm conv} ( \{ 0 \} \cup \Delta_i). 
\end{equation}
Let $\rho_{ij} \in \ZZ^n \setminus \{ 0 \}$ 
be the primitive inner conormal vector of the 
facet $\widehat{\Gamma_{ij}} \prec \widehat{\Delta_i}$ 
of the polytope $\widehat{\Delta_i}$ and set 
\begin{equation}
h_{ij}:= \langle \rho_{ij}, a(j_0) \rangle >0. 
\end{equation}
We call $h_{ij}>0$ the lattice height of 
the point $a(j_0)$ from 
the facet $\widehat{\Gamma_{ij}}$. 
By this definition of $h_{ij}$ 
we have 
\begin{equation}
\Vol_{\ZZ} 
( \widehat{\Delta_i} )=
 \sum_{j=1}^{m_i}h_{ij} \cdot \Vol_{\ZZ} 
( \widehat{\Gamma_{ij}} ). 
\end{equation} 
Then our main theorem is 
as follows. 

\begin{theorem}\label{MTM} 
Assume that the parameter vector 
$c \in \CC^n$ is non-resonant. Then 
we have 
\begin{equation}
\begin{aligned}
\lambda_{j_0}^{\infty}(t) & = & 
\prod_{i=1}^r \prod_{j=1}^{m_i} 
\Bigl\{ 
t^{h_{ij}}- \exp (-2 \pi \sqrt{-1} 
\langle \rho_{ij}, c \rangle ) 
\Bigr\}^{\Vol_{\ZZ}(\widehat{\Gamma_{ij}})} 
\\
& & \times 
\Bigl(t-1 \Bigr)^{\Vol_{\ZZ}(\Delta)- 
\sum_{i=1}^r \Vol_{\ZZ}(\widehat{\Delta_i})}. 
\end{aligned}
\end{equation}
\end{theorem}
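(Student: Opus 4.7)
The plan is to derive the formula from the integral representation of confluent $A$-hypergeometric functions by rapid decay homology cycles established in \cite{E-T-2}. That representation identifies the local system $\shom_{\D_{X^{\an}}}((\M_{A,c})^{\an},\sho_{X^{\an}})|_{\Omega^{\an}}$ (up to duality) with a local system $\CH$ whose stalk at $z \in \Omega^{\an}$ is a rapid decay homology group of the torus $T=(\CC^*)^n$ equipped with the exponential weight $\exp(h_z(x))$ and the rank-one local system with monodromy $\exp(2\pi\sqrt{-1}c_k)$ around $\{x_k=0\}$. Consequently, $\lambda_{j_0}^{\infty}(t)$ is (up to transposition) the characteristic polynomial of the monodromy of $\CH$ along $C$, and the problem is to compute how rapid decay cycles in $T$ transform as $z_{j_0}$ traces $C$ with the other $z_j$ fixed at generic values.

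First, I would introduce a toric compactification of $T$ whose fan refines the inner normal fan of $\Delta$, so that the boundary is stratified by the faces of $\Delta$ and the exponent $h_z(x)$ extends meromorphically, its order of growth along each toric divisor being dictated by the corresponding face. Rapid decay cycles supported near toric divisors associated to faces of $\Delta$ that either contain $0$ or do not contain $a(j_0)$ can be chosen independent of the argument of $z_{j_0}$, and a dimension count identifies their total multiplicity as $\Vol_\ZZ(\Delta) - \sum_{i=1}^r \Vol_\ZZ(\widehat{\Delta_i})$, producing the factor $(t-1)^{\Vol_\ZZ(\Delta)-\sum_i \Vol_\ZZ(\widehat{\Delta_i})}$.

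Second, for each facet $\Delta_i$ with $a(j_0)\in\Delta_i$ and $0\notin\Delta_i$, I would isolate the remaining cycles concentrated near the corresponding toric stratum. A rescaling of $x$-variables adapted to the inner conormal of $\widehat{\Delta_i}$ converts the monodromy problem into one about Lefschetz thimbles of the face polynomial $h_z^{\Delta_i}$, which by the nondegeneracy hypothesis has exactly $\Vol_\ZZ(\widehat{\Delta_i})$ nondegenerate critical points on $T$. To extract the refined block structure indexed by $\Gamma_{ij}$, I would further blow up along the toric stratum associated to each $\widehat{\Gamma_{ij}}$. On the resulting chart, the leading part of the exponent as $|z_{j_0}|\to\infty$ takes the form $z_{j_0}\, y^{h_{ij}}\, g(y')+\text{(higher weight terms)}$, where $g$ is the corresponding face polynomial of $h_z^{\Gamma_{ij}}$, generically nondegenerate with $\Vol_\ZZ(\widehat{\Gamma_{ij}})$ critical points on the lower-dimensional torus. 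For each such critical point, the factor $y^{h_{ij}}$ produces $h_{ij}$ Lefschetz thimbles that cyclically permute when $z_{j_0}\mapsto e^{2\pi\sqrt{-1}}z_{j_0}$, and the rank-one coefficient contributes the phase $\exp(-2\pi\sqrt{-1}\langle\rho_{ij},c\rangle)$. This delivers the block $\bigl(t^{h_{ij}}-\exp(-2\pi\sqrt{-1}\langle\rho_{ij},c\rangle)\bigr)^{\Vol_\ZZ(\widehat{\Gamma_{ij}})}$.

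The main obstacle is to justify the direct-sum decomposition of $\CH$ into the blocks above, namely to choose a basis of rapid decay cycles adapted simultaneously to the toric stratification and to the saddle-point structure of $h_z$, and to show that no block interacts with another. This calls for a Mayer--Vietoris-type argument comparing the contributions from neighboring toric strata, together with the non-resonance assumption on $c$, which prevents the twist data of distinct facets from mixing and makes the total dimension count match Theorem \ref{ADL}. In effect, the argument transplants the scheme of \cite{T-1} (formulated for ordinary Betti cycles in the non-confluent case) to Hien's framework of rapid decay homology developed in \cite{H-1} and \cite{H-2}; the key technical input is the uniform control of the decay of each thimble as $z_{j_0}$ varies on $C$, which this framework supplies.
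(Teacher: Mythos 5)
Your outline follows the paper's strategy in its broad architecture: the integral representation of \cite{E-T-2} reduces the problem to the monodromy on Hien's rapid decay homology of $T$ with weight $\exp(h_z)$ and twist $\LL$; a toric compactification refining the normal fan of $\Delta$ then decomposes this homology into blocks attached to the facets $\Gamma_{ij}$ of $\Delta_i$, each contributing the cyclotomic-type factor, with the leftover part giving $(t-1)^{\Vol_{\ZZ}(\Delta)-\sum_i\Vol_{\ZZ}(\widehat{\Delta_i})}$. That much agrees with the paper.

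There are, however, two genuine gaps. First, the explicit rapid decay basis from \cite{E-T-2} (whether packaged as figure-$8$ cycles, as the paper does, or as the Lefschetz thimbles you propose) exists only for \emph{generic} $c$. The paper bridges this via Lemma~\ref{HDO} and its corollary: there is a basis of $\M_{A,c}$ and hence of solutions varying holomorphically in $c$, so $\lambda_{j_0}^{\infty}(t)$ is a holomorphic function of non-resonant $c$, and it suffices to check the formula on a dense set of parameters. Without some such continuation argument your outline only establishes the result for generic non-resonant $c$, not all non-resonant $c$.

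Second, the direct-sum decomposition by facets $\Gamma_{ij}$ is the technical heart, and you only appeal to an unspecified ``Mayer--Vietoris-type argument.'' What makes the Mayer--Vietoris sequence actually degenerate into a clean direct sum is an A'Campo-type vanishing lemma (Lemma~\ref{AL} of the paper): for generic twist parameters, rapid decay homology is trivial near toric strata where two or more pole divisors of $h_z$ meet transversally away from the zero locus of the face polynomial. You need to supply this input; otherwise ``no block interacts with another'' is an assertion, not a proof. (Also, the extra blow-up you propose along the $\widehat{\Gamma_{ij}}$ strata is unnecessary and is not what the paper does: the splitting into groups indexed by $\Gamma_{ij}$ already emerges on the toric boundary from the behavior of the zeros of the face polynomial $g$ as $|z_{j_0}|\to\infty$, controlled by the auxiliary Laurent polynomial $\tl{g}$ introduced in Section~\ref{sec:2}.)
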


\noindent We will prove this theorem in 
Section \ref{sec:3}. Our proof is based 
on an explicit construction 
of a basis of the corresponding rapid decay 
homology group. It also 
enables us to compute the whole monodromy 
operator (including the nilpotent part) 
in small dimensions. Note that by the method 
of \cite{T-1} which cannot be applied 
to the confluent case we obtain only the 
semisimple part of the monodromy.

\section{A decomposition of a certain rapid decay 
homology group}\label{sec:2}

In this section, for the preparation of 
the proof of Theorem \ref{MTM} we 
show a decomposition of a certain rapid decay 
homology group associated to a 
special Laurent polynomial. 

\medskip 
First of all, we recall the definition of 
real oriented blow-ups in Hien 
\cite{H-1} and \cite{H-2} etc. Let $Z$ be a 
complex manifold of dimension $n$ and 
$D \subset Z$ a normal crossing divisor 
in it. Then for each point $q \in Z$ by 
taking a local coordinate $(x_1,\ldots, x_n)$ 
of $Z$ on a neighborhood of $q$ such that 
$q=(0,\ldots, 0)$ and $D= \{ x_1 \cdots x_k 
=0 \}$ for some $0 \leq k \leq n$ we obtain 
a morphism 
\begin{eqnarray}
([0, \varepsilon ) \times S^1)^k 
\times B(0; \varepsilon )^{n-k} 
& \longrightarrow & 
B(0; \varepsilon )^{k} 
\times B(0; \varepsilon )^{n-k}
\\
( \{ (r_i, e^{\sqrt{-1} \theta_i}) \}_{i=1}^k, 
x_{k+1}, \ldots, x_n) 
 & \longmapsto & 
( \{ r_i e^{\sqrt{-1} \theta_i} \}_{i=1}^k, 
x_{k+1}, \ldots, x_n) 
\end{eqnarray}
to $B(0; \e )^k \times B(0; \e )^{n-k} 
\subset Z$, where we set $B(0; \e )= 
\{ x \in \CC \ | \ |x|< \e \}$ 
for sufficiently small $\e >0$. By patching 
these morphisms 
together naturally, we can construct 
a real $(2n)$-dimensional manifold 
$\tl{Z}$ with boundary and a morphism 
$\pi : \tl{Z} \longrightarrow Z$ which 
induces an isomorphism $\tl{Z} \setminus 
\pi^{-1}(D) \simto Z \setminus D$. We call 
it the real oriented blow-up of $Z$ along $D$. 

\medskip 
Let $g(x)$ be a Laurent polynomial on 
$T_0=( \CC^*)^{n-1}_x$. Assume that 
$g(x)$ is non-degenerate with respect to 
its Newton polytope $NP(g) \subset \RR^{n-1}$ 
and $\dim NP(g)=n-1$. 
Namely we assume that for any face 
$\Gamma$ of $NP(g)$ the $\Gamma$-part 
$g^{\Gamma}$ of $g$ satisfies the condition 
\begin{equation} 
\left\{ x \in T_0=(\CC^*)^{n-1}  
\ | \ g^{\Gamma}(x) = 
\frac{\partial g^{\Gamma}}{\partial x_1}(x)
= \cdots \cdots = 
\frac{\partial g^{\Gamma}}{\partial x_{n-1}}
(x)=0 \right\} = \emptyset .  
\end{equation}
For a positive 
integer $h>0$ we define a meromorphic 
function $G(x,t)$ on $W=T_0 \times \CC_t$ by 
\begin{equation}
G(x,t)= \frac{1}{g(x)^{h-1}t^h}. 
\end{equation}
Here we consider such a very special function 
in order to modify our construction of 
the basis of the rapid decay homology 
group in \cite[Section 7]{E-T-2}. 
Since our results below are technical, 
for the first reading the reader can skip 
them and directly go to Section \ref{sec:3}. 
Set $D= \{ (x,t) \in W \ | \ g(x) \cdot t=0 \} 
\subset W$ and $W^{\circ} =W \setminus D$.   
Let $\pi : \tl{W} \longrightarrow 
W$ be the real oriented blow-up of $W$ 
along the normal crossing divisor $D \subset W$ and 
$\iota : W^{\circ} \hookrightarrow \tl{W}$ 
the inclusion map. We identify $D_0= \{ 
x \in T_0 \ | \ g(x) \not= 0 \} \subset T_0$ 
with an open subset of $D$ naturally and set 
$\tl{D_0} = \pi^{-1}(D_0)$, 
$P= \tl{D_0} \cap \overline{ \{ {\rm Re} G \geq 0 \} }$ 
and $Q= \tl{D_0} \setminus P$. Note that 
the set $Q \subset \tl{D_0} \subset \tl{W}$ 
consists of the rapid decay directions of 
the function $\exp (G)$ over $D_0 \subset D$. 
Now let us consider the local system 
\begin{equation}
\LL = \CC_{W^{\circ}} x_1^{\beta_1} \cdots 
x_{n-1}^{\beta_{n-1}} g(x)^{\beta_{n}} 
t^{\beta_{n}} 
\end{equation}
($\beta =( \beta_1, \ldots, \beta_n) \in \CC^n$) 
on $W^{\circ}$. By abuse of notation, 
for $p \in \ZZ$ we set 
\begin{equation}
H_p^{{\rm rd}}(W^{\circ})=H_p (W^{\circ} \cup Q, 
Q, \iota_* ( \LL ) ), 
\end{equation}
where $H_p (W^{\circ} \cup Q, 
Q, \iota_* ( \LL ) )$ 
stands for the $p$-th 
relative twisted homology group of the pair 
$(W^{\circ} \cup Q, Q)$ with coefficients 
in the rank-one local 
system $\iota_*( \LL )$ 
on $\tl{W}$ 
(see \cite{A-K}, \cite{Paj}, \cite{Pham} etc.). 
Our aim here is to decompose 
the vector space $V(p):=H_p^{{\rm rd}}(W^{\circ})$ 
into a direct sum with respect to some facets of 
$NP(g)$. For this purpose, we fix 
a lattice point $a \in NP(g) \cap \ZZ^{n-1}$ 
and impose a condition on the coefficient 
$z \in \CC$ of $x^a$ in $g(x)$ as follows. 
First we define a new 
Laurent polynomial $\tl{g}(x)$ 
whose constant term is zero by 
\begin{equation}
\tl{g}(x)=z-x^{-a}g(x). 
\end{equation}
Then obviously we have $\{ x \in T_0 \ | \ g(x)=0 \} 
= \{ x \in T_0 \ | \ \tl{g} (x)= z \}$. 
It is well-known that there exists $M \gg 0$ 
such that the restriction 
\begin{equation}
\tl{g}^{-1}( \CC \setminus B(0;M) ) \longrightarrow 
\CC \setminus B(0;M)
\end{equation}
of the map $\tl{g}: T_0 \longrightarrow \CC$ 
is a locally trivial fibration. 
Now we require the coefficient 
$z \in \CC$ of $x^a$ in $g(x)$ to satisfy 
the condition $|z| > M$. 
This condition on $z \in \CC$ in particular 
implies that the hypersurface 
$\tl{g}^{-1}(z)=g^{-1}(0) \subset T_0$ 
is smooth. 
We denote by $\Delta_0$ the convex 
hull of $NP(\tl{g}) \cup \{ 0\}$ in $\RR^{n-1}$. 
Then it is easy to see that we have 
$\Delta_0=NP(g)-a$. Let $\Sigma_0$ be the 
dual fan of $\Delta_0$ and 
$\Sigma$ its smooth subdivision 
(see \cite{Fulton}, \cite{Oda} etc.). 
We denote by $X_{\Sigma}$ the smooth toric 
variety associated to $\Sigma$. Then 
$X_{\Sigma}$ is a compactification of 
$T_0=( \CC^*)^{n-1}_x$. 
Let $D_1, \ldots, D_l \subset X_{\Sigma}$ 
be the $T_0$-divisors in $X_{\Sigma}$ 
and for $1 \leq i \leq l$ denote by 
$n_i \geq 0$ the order of the pole of 
the meromorphic function $\tl{g} (x)$ 
on $X_{\Sigma}$ along $D_i$. 
Recall that $X_{\Sigma} \setminus 
T_0= \cup_{i=1}^l D_i$ is a normal crossing 
divisor in $X_{\Sigma}$. By the non-degeneracy 
of $g$, the closure 
$\overline{ \tl{g}^{-1}(0) } \subset X_{\Sigma}$ 
of the hypersurface $\tl{g}^{-1}(0) \subset T_0$ 
in $X_{\Sigma}$ intersects 
any $T_0$-orbit in some $D_i$ 
such that $n_i>0$ transversally. 
More precisely, if 
\begin{equation}
q \in (D_{i_1} \cap \cdots \cap D_{i_k} 
\cap \overline{\tl{g}^{-1}(0)} ) \setminus 
( \cup_{i \notin \{ i_1, \ldots, i_k \}} D_i ) 
\end{equation}
for some $1 \leq i_1< i_2 < \cdots < i_k \leq l$ 
such that $1 \leq k < \dim X_{\Sigma} =n-1$ 
and $\# \{ 1 \leq j \leq k \ | 
\ n_{i_j} >0 \} \geq 1$, 
then there exists a local 
coordinate $y=(y_1,y_2, \ldots, y_{n-1})$ 
of $X_{\Sigma}$ on a neighborhood of $q$ 
such that $q=0$, $D_{i_j}= \{ y_j=0 \}$ 
($1 \leq j \leq k$), 
\begin{equation}
\tl{g} (y)= \frac{y_{n-1}}{y_1^{n_{i_1}} 
y_2^{n_{i_2}}  \cdots y_k^{n_{i_k}} } 
\end{equation}
and $\overline{ \tl{g}^{-1}(0)} = 
\{ y_{n-1}=0 \}$. 
By this explicit description of $\tl{g}$ 
we see that for any $t \in \CC$ 
the closure 
\begin{equation}\label{EXPL} 
\overline{ \tl{g}^{-1}(t) } 
= \{ y_{n-1}=t y_1^{n_{i_1}} \cdots y_k^{n_{i_k}} \} 
\subset X_{\Sigma} 
\end{equation}
of the hypersurface 
$\tl{g}^{-1}(t) \subset T_0$ in $X_{\Sigma}$ 
is smooth in a neighborhood of $q$ and we have 
\begin{equation}
D_{i_1} \cap \cdots \cap D_{i_k} 
\cap \overline{\tl{g}^{-1}(t)} = 
D_{i_1} \cap \cdots \cap D_{i_k} 
\cap \overline{\tl{g}^{-1}(0)} 
\end{equation}
(see also \cite[Section 3.5]{Zaharia} etc.). 
In particular, the hypersurface 
$\overline{ \tl{g}^{-1}(z) } 
= \overline{ g^{-1}(0) } \subset X_{\Sigma}$ for the above 
fixed constant $z \in \CC$ with 
the condition $|z| > M>0$ has this property and 
intersects $D_{i_1} \cap \cdots \cap D_{i_k}$ 
transversally. Moreover, by taking $|z|>M$ 
large enough, we may assume also that 
$\overline{ \tl{g}^{-1}(z) } 
= \overline{ g^{-1}(0) }$ intersects 
any $T_0$-orbit in $X_{\Sigma}$ transversally. 

\begin{lemma}\label{AL} (cf. A'Campo's lemma) 
Let $q$ be a point in $X_{\Sigma} \setminus 
T_0= \cup_{i=1}^l D_i$. Assume that for 
$1 \leq i_1< i_2 < \cdots < i_k \leq l$ 
such that $\# \{ 1 \leq j \leq k \ | 
\ n_{i_j} >0 \} \geq 2$ we have 
\begin{equation}
q \in (D_{i_1} \cap \cdots \cap D_{i_k}) \setminus 
( \cup_{i \notin \{ i_1, \ldots, i_k \}} D_i \cup 
\overline{\tl{g}^{-1}(0)} ). 
\end{equation}
Then there exists an open neighborhood 
$U$ of $q$ in $X_{\Sigma}$ such that 
$U \cap \tl{g}^{-1}(z)= U \cap g^{-1}(0) 
\not= \emptyset$ and 
\begin{equation}
H_p \Bigl( \{ (U \times \CC_t) \cap W^{\circ} \} 
\cup Q, Q, \iota_* ( \LL ) \Bigr) \simeq 0 \qquad 
(p \in \ZZ) 
\end{equation}
for generic $\beta =( \beta_1, \ldots, \beta_n) 
\in \CC^n$. 
\end{lemma}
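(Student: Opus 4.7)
The plan is to work in local toric coordinates near $q$ and produce an $S^{1}$-action on the pair under consideration that preserves all structures---including the rapid decay set $Q$---and acts on the local system $\iota_{*}\LL$ by a non-trivial character for generic $\beta$; a standard Leray spectral sequence argument for the quotient then yields the vanishing.

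First I would choose local toric coordinates $y=(y_1,\ldots,y_{n-1})$ on a neighborhood of $q$ in $X_{\Sigma}$ so that $D_{i_j}=\{y_j=0\}$ for $1\leq j\leq k$ and $\tl{g}(y)=u(y)/(y_1^{n_{i_1}}\cdots y_k^{n_{i_k}})$, where $u$ is a holomorphic unit at $q$; such coordinates exist precisely because $q\notin\overline{\tl{g}^{-1}(0)}$. Let $U$ be a small polydisk in these coordinates. For $|z|>M$ sufficiently large, the equation $u(y)=z\,y_1^{n_{i_1}}\cdots y_k^{n_{i_k}}$ has a non-empty smooth solution set in $U$, establishing $U\cap g^{-1}(0)=U\cap\tl{g}^{-1}(z)\neq\emptyset$.

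Next, without loss of generality $n_{i_1},n_{i_2}>0$, so the linear form $\mu\mapsto\sum_{j=1}^{k}n_{i_j}\mu_j$ on $\ZZ^{n-1}$ has a kernel of rank $n-2\geq 1$ (note that $k\geq 2$ forces $n\geq 3$). Pick any non-zero integer vector $\mu=(\mu_1,\ldots,\mu_{n-1})$ in this kernel (for instance $\mu=(n_{i_2},-n_{i_1},0,\ldots,0)$), and set $\kappa=\mu\cdot a^{*}\in\ZZ$, where $(a^{*}_j)$ is the representation of $a\in\ZZ^{n-1}$ in the $y$-coordinates. Consider the holomorphic $\CC^{*}$-action
\begin{equation*}
\lambda\cdot(y_1,\ldots,y_{n-1},t)=(\lambda^{h\mu_1}y_1,\ldots,\lambda^{h\mu_{n-1}}y_{n-1},\,\lambda^{-\kappa(h-1)}t).
\end{equation*}
Since $\mu$ lies in the kernel, $\tl{g}$ is invariant; consequently $g=x^a(z-\tl{g})$ acquires the factor $\lambda^{h\kappa}$, while $t$ acquires $\lambda^{-\kappa(h-1)}$, so the exponent $G=1/(g^{h-1}t^h)$ has total weight $-h\kappa(h-1)+\kappa h(h-1)=0$ and is invariant. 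In particular the divisor $D$, the space $W^{\circ}$, and the rapid decay set $Q=\{\mathrm{Re}(G)<0\}\subset\tl{W}$ are preserved.

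Restricting to $|\lambda|=1$ gives an $S^{1}$-action on the pair $(\{(U\times\CC_t)\cap W^{\circ}\}\cup Q,\,Q)$ under which $\iota_{*}\LL$ transforms by the scalar character $\lambda\mapsto\lambda^{h\mu\cdot\beta^{*}+\kappa\beta_n}$, where $\beta^{*}=(\beta^{*}_1,\ldots,\beta^{*}_{n-1})$ is the representation of $(\beta_1,\ldots,\beta_{n-1})$ in the $y$-coordinates. This is a non-trivial $\ZZ$-linear form in $\beta$ because $\mu\neq 0$. Thus for generic $\beta$ every generic $S^{1}$-orbit has monodromy $\exp(2\pi\sqrt{-1}(h\mu\cdot\beta^{*}+\kappa\beta_n))\neq 1$ on $\iota_{*}\LL$, and the Leray spectral sequence for the quotient by the $S^{1}$-action has its $E^{2}$-page factoring through $H^{*}(S^{1};\iota_{*}\LL|_{\mathrm{orbit}})=0$; hence the total relative homology group vanishes.

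The main obstacle, which the weight-twist $\lambda^{-\kappa(h-1)}$ on the $t$-coordinate is designed to overcome, is ensuring that the $S^{1}$-action really does preserve the rapid decay set $Q$. Without this compensation $G$ would rotate by a non-trivial $\lambda$-dependent phase, $Q$ would fail to be invariant, and the Leray argument would collapse. Once the compensation is installed, the remaining technicalities---the finite isotropies on $D_{i_1}\cap D_{i_2}$, and the relative version of the Leray spectral sequence with local-system coefficients---are routine.
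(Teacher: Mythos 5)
Your proposal is correct and takes essentially the same approach as the paper, which simply cites A'Campo's lemma (via \cite{AC} and \cite{Oka}) and \cite[Lemma 6.1]{E-T-2} and omits the details. The A'Campo argument is precisely the circle-action/Leray mechanism you set up, and you correctly identify the one non-routine point in this setting — that because the relevant function is $G = g(x)^{1-h}t^{-h}$ rather than a single coordinate function, the $S^1$-action on the $y$-coordinates must be supplemented by a compensating rotation of $t$ of weight $-\kappa(h-1)$ so that $G$, and hence the rapid decay set $Q$, is preserved; your verification that the resulting character on $\iota_*\LL$ is a nonzero integral linear form in $\beta$ (and hence generically irrational) is sound. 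In short, you have supplied the content of the reference the authors invoke, rather than an alternative route.
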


\begin{proof} 
By using \cite[Lemma 6.1]{E-T-2} the 
proof proceeds completely 
similarly to that of A'Campo's lemma 
for monodromy zeta functions in 
\cite{AC} (see Oka 
\cite[Example (3.7)]{Oka} etc.). 
We omit the details. 
\end{proof} 
Let $\Gamma_1, \ldots, \Gamma_m$ be the 
facets of $\Delta_0$ such that 
$0 \notin \Gamma_j$. For $1 \leq j \leq m$ 
let $T_j \simeq ( \CC^*)^{n-2}$ be the 
$(n-2)$-dimensional $T_0$-orbit in 
$X_{\Sigma}$ associated to 
$\Gamma_j \prec \Delta_0$ and 
$\rho_j \in \ZZ^{n-1} \setminus \{ 0 \}$ 
the primitive outer conormal vector 
of the facet $\Gamma_j \prec \Delta_0$. 
We denote by $d_j >0$ the value of 
the linear function $\rho_j$ on 
$\Gamma_j$ and call it 
the lattice height of the origin 
$0 \in \Delta_0$ from $\Gamma_j$. 
Then the order of the pole of 
the meromorphic function $\tl{g} (x)$ 
along $T_j$ is equal to 
$d_j >0$. Moreover we have 
\begin{equation}
\Vol_{\ZZ} 
( \Delta_0 )=  
 \sum_{j=1}^{m} d_j \cdot \Vol_{\ZZ} 
( \Gamma_{j} ). 
\end{equation}
Note that for any $1 \leq j \leq m$ there 
exists unique $1 \leq i \leq l$ such that 
$T_j$ is an open subset of $D_i$ and we have 
$d_j=n_i$. By the above explicit 
descriptions of $\tl{g}$ 
and $\overline{ \tl{g}^{-1}(z) } 
= \overline{ g^{-1}(0) }$ (see \eqref{EXPL}), 
for any $1 \leq j \leq m$ 
there exists a tubular 
neighborhood $W_j$ of $T_j^{\circ}= 
T_j \setminus \overline{\tl{g}^{-1}(0)}$ 
in $X_{\Sigma} \setminus 
\overline{\tl{g}^{-1}(0)}$ such that 
$W_j \cap \tl{g}^{-1}(z)= W_j \cap g^{-1}(0) 
\not= \emptyset$ and $W_j \cap \tl{g}^{-1}(z)
=W_j \cap  g^{-1}(0)$ 
is a covering of $T_j^{\circ}$ of degree 
$d_j$. 
For $p \in \ZZ$ set 
\begin{equation}
V(p)_j:=
H_p \Bigl( \{ (W_j \times \CC_t) \cap W^{\circ} \} 
\cup Q, Q, \iota_* ( \LL ) \Bigr). 
\end{equation}
Then by Lemma \ref{AL} and Mayer-Vietoris 
exact sequences for relative twisted 
homology groups, for generic 
$\beta =( \beta_1, \ldots, \beta_n) 
\in \CC^n$ we obtain isomorphisms 
\begin{equation}
\oplus_{j=1}^m V(p)_j 
\simto V(p)= H_p^{{\rm rd}}( W^{\circ} ) \qquad 
(p \in \ZZ ). 
\end{equation}
Moreover as in the proof of 
\cite[Propositon 7.5]{E-T-2}, 
by using the twisted Morse theory on 
$T_j \simeq ( \CC^*)^{n-2}$ (with 
the help of our figure-8 construction 
of the rapid decay cycles 
in the two-dimensional case in 
\cite[Section 6]{E-T-2}) 
for any $1 \leq j \leq m$ 
we can show that 
$V(p)_j \simeq 0$ ($p \not= n$) and 
$\dim V(n)_j=h \cdot d_j \cdot \Vol_{\ZZ} 
( \Gamma_{j} )$ for generic 
$\beta =( \beta_1, \ldots, \beta_n) 
\in \CC^n$ and 
construct a basis of the vector space 
$V(n)_j$. More precisely, by our Morse theoretical 
construction of the basis, we obtain 
a filtration of the $\CC$-vector space 
$V(n)_j$ with $\Vol_{\ZZ}( \Gamma_j)$ 
subquotients of dimension $h \cdot d_j$.

\section{The proof of Theorem \ref{MTM}}\label{sec:3}

First of all, we briefly recall the results 
in \cite{E-T-2}. In \cite{E-T-2} 
we proved that if the parameter vector 
$c \in \CC^n$ is non-resonant 
any confluent $A$-hypergeometric function $u(z)$ 
has an integral representation of the form 
\begin{equation}\label{IRF} 
u(z)= \int_{\gamma^z} 
\exp (\sum_{j=1}^N z_j x^{a(j)}) 
x_1^{c_1-1} \cdots x_n^{c_n-1} 
dx_1 \wedge \cdots \wedge dx_n 
\end{equation}
on $\Omega^{\an} \subset X^{\an}$, 
where $\gamma =\{ \gamma^z \}$ is a family of 
real $n$-dimensional 
topological cycles $\gamma^z$ 
in the algebraic torus $T=(\CC^*)^n_x$ 
on which the function 
$\exp (h_z(x))= 
\exp (\sum_{j=1}^N z_j x^{a(j)})$ 
decays rapidly at infinity. 
More precisely $\gamma^z$ is an element of 
Hien's rapid decay homology group as follows. 
Assume that $z \in \Omega^{\an}$. 
Let $\Sigma_0$ be the dual fan of $\Delta$ in $\RR^n$ 
and $\Sigma$ its smooth subdivision 
(see \cite{Fulton}, \cite{Oda} etc.). 
Denote by $Z_{\Sigma}$ the smooth toric 
variety associated to the fan $\Sigma$ 
(see \cite{Fulton}, \cite{Oda} etc.). 
Then $Z_{\Sigma}$ is a smooth 
compactification of $T=(\CC^*)^n_x$ such 
that $Z_{\Sigma} \setminus T$ is 
a normal crossing divisor. Next by using the 
non-degeneracy of the Laurent polynomial 
$h_z(x)= \sum_{j=1}^N z_j x^{a(j)}$, as in 
\cite[Section 3]{M-T-3} and 
\cite[Section 3]{M-T-4} we construct a 
complex blow-up $Z:=\tl{Z_{\Sigma}}$ 
of $Z_{\Sigma}$ such that the meromorphic 
extension of $h_z$ to it has no point 
of indeterminacy. We say that an 
irreducible component of the normal 
crossing divisor $D=Z \setminus T$ is 
irrelevant if the meromorphic extension of 
$h_z$ to $Z$ has no pole along it. Denote by 
$D^{\prime} \subset D$ the union of the 
irrelevant irreducible components in $D$. 
Let $\pi : \tl{Z} 
\longrightarrow Z^{\an}$ be the real oriented 
blow-up of $Z^{\an}$ along $D^{\an}$ 
(see Section \ref{sec:2} and 
Hien \cite{H-1}, \cite{H-2} etc.) 
and set $\tl{D} =\pi^{-1}(D^{\an})$. 
By the natural open embedding 
$\iota : T^{\an} \hookrightarrow \tl{Z}$ 
we consider $T^{\an}$ as an open 
subset of $\tl{Z}$ and set 
$P_z= \tl{D} \cap 
\overline{ \{ x \in T^{\an}  \ | \ 
{\rm Re} h_z(x) \geq 0 \} }$ 
and $Q_z=\tl{D} \setminus 
\{ P_z \cup \pi^{-1}(D^{\prime})^{\an} \}$. Note 
that $Q_z$ is an open subset of $\tl{D}$ 
consisting of the rapid decay directions 
of the function $\exp (h_z(x))$. 
Finally let $\LL$ be the rank-one 
local system on $T^{\an}$ defined by 
$\LL =\CC_{T^{\an}} x_1^{c_1-1} 
\cdots x_n^{c_n-1}$. 
Then Hien's rapid decay homology 
groups that we need for the above integral 
representation are isomorphic to 
\begin{equation}
H_p^{\rd}(T)_z := 
H_p(T^{\an} 
 \cup Q_z, Q_z; \iota_*( \LL ) ) \qquad 
(p \in \ZZ ),  
\end{equation}
where $H_p(T^{\an} 
\cup Q_z, Q_z; \iota_*( \LL ) )$ 
stands for the $p$-th 
relative twisted homology group of the pair 
$(T^{\an} \cup Q_z, Q_z)$ with coefficients 
in the rank-one local 
system $\iota_*( \LL )$ 
on $\tl{Z}$ 
(see \cite{A-K}, \cite{Paj}, \cite{Pham} etc. 
and \cite[Proposition 3.4]{E-T-2}). 
In the proof of \cite[Theorem 4.5]{E-T-2} 
we proved that for any $z \in \Omega^{\an}$ 
we have $H_p^{\rd}(T)_z \simeq 0$ 
$(p \not= n)$ and 
the dimension of $H_n^{\rd}(T)_z$ is 
$\Vol_{\ZZ}( \Delta )$. Let $\CH_{n}^{\rd}$ 
be the local sytem of rank 
$\Vol_{\ZZ}( \Delta )$ on $\Omega^{\an}$ 
whose stalk at $z \in \Omega^{\an}$ 
is isomorphic to $H_n^{\rd}(T)_z$ 
(see also Hien-Roucairol \cite{H-R}). 
Namely sections of $\CH_{n}^{\rd}$ 
are continuous family 
of rapid decay $n$-cycles 
in $T^{\an}$ for the function 
$\exp (h_z(x))$. Then one of the 
main results of \cite{E-T-2} 
is as follows. 

\begin{theorem}\label{INR} 
( \cite[Theorem 4.5]{E-T-2} ) 
Assume that the parameter vector 
$c \in \CC^n$ is non-resonant. 
Then we have an isomorphism 
\begin{equation}\label{ETT} 
\CH_{n}^{\rd} \simeq 
\shom_{\D_{X^{\an}}}
(( \M_{A, c} )^{\an}, \sho_{X^{\an}}) 
\end{equation}
of local systems on $\Omega^{\an}$. 
Moreover this isomorphism is given by 
the integral 
\begin{equation}
\gamma \longmapsto \left\{ \Omega^{\an} \ni z 
\longmapsto \int_{\gamma^z} 
\exp (\sum_{j=1}^N z_j x^{a(j)}) 
x_1^{c_1-1} \cdots x_n^{c_n-1} 
dx_1 \wedge \cdots \wedge dx_n \right\}, 
\end{equation}
where for a continuous family $\gamma$ of rapid 
decay $n$-cycles in $\Omega^{\an} \times T^{\an}$ 
and $z \in \Omega^{\an}$ we 
denote by $\gamma^z \subset T^{\an}$ 
its restriction to $z \in \Omega^{\an}$. 
\end{theorem}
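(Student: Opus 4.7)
The plan is to define the period integral on each stalk, verify that it takes values in the solution sheaf of $\M_{A,c}$, and then promote the stalkwise construction to an isomorphism of local systems on $\Omega^{\an}$ by comparing ranks and proving injectivity at one point.

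The first step is to show that for every $z \in \Omega^{\an}$ and every class $[\gamma^z] \in H_n^{\rd}(T)_z$ the integral
\[ u_\gamma(z) = \int_{\gamma^z} \exp\Bigl(\sum_{j=1}^N z_j x^{a(j)}\Bigr) x_1^{c_1-1} \cdots x_n^{c_n-1}\, dx_1 \wedge \cdots \wedge dx_n \]
is absolutely convergent and independent of the choice of representing chain. Convergence is immediate on any relatively compact part of $T^{\an}$; near the boundary $\tl{D}$ one selects a representative whose non-compact boundary lies in $Q_z$, where $\exp(h_z)$ decays faster than any polynomial and dominates the potential singularity of the multivalued factor $x_1^{c_1-1}\cdots x_n^{c_n-1}$. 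Independence of the representative is Stokes' theorem for the exponentially twisted form $\exp(h_z)\cdot x_1^{c_1-1}\cdots x_n^{c_n-1}\,dx$, whose boundary contributions along $Q_z$ and the irrelevant divisor $D^{\prime}$ vanish by the same rapid-decay and twisted-local-system considerations. Holomorphic dependence on $z \in \Omega^{\an}$ then follows from local triviality of the family $\{Q_z\}_z$ together with dominated convergence.

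Next I would verify that $u_\gamma$ is annihilated by every generator of $\M_{A,c}$. For a box operator $\square_\mu$, the relation $A\mu = 0$ forces $\sum_{\mu_j>0}\mu_j\, a(j) = -\sum_{\mu_j<0}\mu_j\, a(j)$, and differentiating under the integral sign yields the identity $\square_\mu \exp(h_z(x)) = 0$ pointwise in $x$. For an Euler operator $Z_{i,c}$, differentiation under the integral converts $\sum_j a_{i,j} z_j \partial_{z_j}$ into the logarithmic vector field $x_i \partial_{x_i}$ applied to $\exp(h_z)$, and integration by parts against $x_1^{c_1-1}\cdots x_n^{c_n-1}\,dx$ reproduces the shift by $-c_i$ with boundary contributions absorbed by rapid decay. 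This upgrades $\gamma \mapsto u_\gamma$ to a $\CC$-linear morphism of local systems $\CH_n^{\rd} \longrightarrow \shom_{\D_{X^{\an}}}((\M_{A,c})^{\an}, \sho_{X^{\an}})$ on $\Omega^{\an}$. To conclude it is an isomorphism, I would match ranks: Adolphson's Theorem \ref{ADL} gives rank $\Vol_{\ZZ}(\Delta)$ on the target, while an independent computation on the source, using the non-degeneracy of $h_z$, an A'Campo-type vanishing on the toric strata of $\tl{Z}$, and Mayer-Vietoris along the faces of $\Delta$, shows $H_p^{\rd}(T)_z=0$ for $p\neq n$ and $\dim H_n^{\rd}(T)_z = \Vol_{\ZZ}(\Delta)$.

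The main obstacle is injectivity at a single stalk. My strategy is saddle-point asymptotics: at a suitably scaled $z$, the twisted phase $h_z+\sum_i(c_i-1)\log x_i$ has $\Vol_{\ZZ}(\Delta)$ non-degenerate critical points $x_*$ with distinct critical values, and the corresponding Lefschetz thimbles form a basis of $H_n^{\rd}(T)_z$ whose periods $u_\gamma(z)$ carry linearly independent dominant exponential factors $\exp(h_z(x_*))$. The delicate points are (i) constructing the thimble basis so that its boundary actually lies in $Q_z$ on $\tl{Z}$ and avoids the irrelevant divisor $D^{\prime}$, and (ii) showing that no nontrivial linear combination of the thimble periods vanishes identically in $z$, which comes down to the linear independence of exponentials with distinct rates. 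The non-resonance hypothesis on $c$ enters precisely here: it ensures that the monodromy contribution from each toric boundary stratum is generic enough to separate the thimbles in the saddle-point expansion.
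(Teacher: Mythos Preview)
This theorem is not proved in the present paper: it is quoted verbatim from \cite[Theorem 4.5]{E-T-2} and used as input for the proof of Theorem~\ref{MTM}. So there is no argument here against which to compare your proposal; the paper only records, just before the statement, that \cite{E-T-2} also establishes $H_p^{\rd}(T)_z\simeq 0$ for $p\neq n$ and $\dim H_n^{\rd}(T)_z=\Vol_{\ZZ}(\Delta)$.

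On your sketch itself: the first two steps (convergence, independence of the representative, holomorphy in $z$, annihilation by $\square_\mu$ and $Z_{i,c}$) are standard and correct in outline, and the rank match on both sides is exactly what one wants. The weak point is your injectivity argument. Separating thimble periods by their dominant exponential factors $\exp(h_z(x_*))$ requires the $\Vol_{\ZZ}(\Delta)$ critical \emph{values} of $h_z$ to be pairwise distinct; that is a genericity condition on $z\in\Omega$, not on $c$, and you have not argued that such $z$ exist (it does follow from non-degeneracy plus Kouchnirenko-type counts, but this needs to be said). More importantly, your explanation of where non-resonance enters is off: non-resonance of $c$ does not ``separate thimbles in the saddle-point expansion''; it is the hypothesis that forces the target sheaf to have rank $\Vol_{\ZZ}(\Delta)$ (Theorem~\ref{ADL}) and, on the cycle side, controls the twisted local-system monodromies so that the relevant relative homology groups have the expected dimension. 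Without it the two ranks need not agree and your morphism can fail to be injective even if all critical values are distinct.

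The route actually taken in \cite{E-T-2}, and the reason Hien's work \cite{H-1,H-2} is invoked, avoids asymptotics altogether: Hien's theorem gives a \emph{perfect} period pairing between $H_n^{\rd}(T)_z$ and the algebraic de Rham cohomology of the rank-one meromorphic connection $\bigl(\sho_T,\, d+dh_z+\sum_i(c_i-1)\,d\log x_i\bigr)$ on $T$, and the latter is identified (via Adolphson's description of $M_{A,c}$ in \cite{A}, cf.\ also \cite{SW1}) with the stalk of the solution sheaf. Perfectness of the pairing is exactly the injectivity statement you are trying to prove by hand, so if you want a self-contained argument you should either reprove Hien's duality in this special case or tighten the saddle-point step by first fixing a generic $z$ with distinct critical values and then correctly locating the role of non-resonance in the rank equality rather than in the asymptotics.
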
 
By this theorem, the study of the monodromies 
at infinity of confluent $A$-hypergeometric 
functions is reduced to that of the rapid 
homology groups $H_n^{\rd}(T)_z$ 
$(z \in \Omega^{\an})$. In 
\cite[Sections 6 and 7]{E-T-2} we constructed 
a basis of $H_n^{\rd}(T)_z$ 
for generic $c \in \CC^n$. From now on, 
we shall prove Theorem \ref{MTM} 
by slightly modifying 
this construction with the help of 
the results in Section \ref{sec:2}. 

\medskip 
For the preparation, first let us show that it 
is enough to prove Theorem \ref{MTM} 
only for ``generic'' non-resonant $c \in \CC^n$. 
As in Adolphson \cite{A}, fix 
a point $z^{(0)} \in \Omega$ and consider 
the $\CC [z_1, \ldots, z_N]$-module 
\begin{equation}
\CC_{z^{(0)}} := \CC [z_1, \ldots, z_N]/ 
\sum_{j=1}^N \CC [z_1, \ldots, z_N] (z_j-z^{(0)}_j)
\simeq \CC. 
\end{equation}
Then for any non-resonant $c \in \CC^n$ 
the holonomic $\D_X$-module 
$\M_{A, c}$ is an integrable 
connection of rank $\Vol_{\ZZ} 
( \Delta )$ on a neighborhood of 
$z^{(0)} \in \Omega$ and we have an isomorphism 
\begin{equation}
\CC_{z^{(0)}} \otimes_{\CC [z_1, \ldots, z_N]} 
M_{A, c} \simeq \CC^{\Vol_{\ZZ}( \Delta )}. 
\end{equation}

\begin{lemma}\label{HDO} 
There exists a family of bases 
\begin{equation}
s_{1,c}, \ldots, 
s_{\Vol_{\ZZ}( \Delta ),c} \in 
\Gamma (U; \M_{A, c} ) 
\end{equation}
of the integral 
connection $\M_{A, c}$ on a neighborhood $U$ of 
$z^{(0)} \in \Omega$ parametrized by all 
non-resonant $c \in \CC^n$ such that their images 
\begin{equation}
\overline{s_{1,c}}, \ldots, 
\overline{s_{\Vol_{\ZZ}( \Delta ) ,c}} \in 
\CC_{z^{(0)}} \otimes_{\CC [z_1, \ldots, z_N]} 
M_{A, c} \simeq \CC^{\Vol_{\ZZ}( \Delta )} 
\end{equation}
generate the $\CC$-vector space 
$\CC_{z^{(0)}} \otimes_{\CC [z_1, \ldots, z_N]} 
M_{A, c}$ over $\CC$ and the connection matrices 
of $\M_{A, c}$ with respect to them depend 
holomorphically on non-resonant $c \in \CC^n$. 
\end{lemma}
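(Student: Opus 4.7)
The approach is to assemble the individual modules $\M_{A,c}$ into a single coherent sheaf varying holomorphically in $c$, and then extract a local basis from this family. Concretely, I would introduce the $\D_X$-module with parameters: consider the sheaf $\N$ on $X \times \CC^n_c$ defined as the quotient of $\D_X \otimes \sho_{\CC^n_c}$ by the left ideal generated by $Z_{i,c}$ ($1 \le i \le n$) and $\square_{\mu}$ ($\mu \in \Ker A \cap \ZZ^N$). Since $Z_{i,c} = \sum_j a_{ij} z_j \partial_{z_j} + c_i$ depends linearly on $c$ and the $\square_{\mu}$ do not depend on $c$, the object $\N$ is naturally a holomorphic family of $\D_X$-modules whose fiber at $c = c^{(0)}$ is isomorphic to $\M_{A, c^{(0)}}$.

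Next, I would prove that $\N$ is a locally free $\sho_{X \times \CC^n_c}$-module of rank $r = \Vol_\ZZ(\Delta)$ on $\Omega \times V$, where $V \subset \CC^n_c$ denotes the open non-resonant locus. Fiberwise, Adolphson's Theorem \ref{ADL} says each $\M_{A, c^{(0)}}|_\Omega$ is an integrable connection of rank $r$ and is therefore generated over $\sho_\Omega$ by finitely many monomial images $\overline{\partial^{\alpha(1)}}, \ldots, \overline{\partial^{\alpha(N)}}$. By Nakayama applied at $(z^{(0)}, c^{(0)}) \in \Omega \times V$, the same monomials generate $\N$ near this point as an $\sho_{X \times \CC^n_c}$-module, yielding coherence; constancy of the fiber dimension at $r$ over $\Omega \times V$ then upgrades coherence to local freeness.

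Once $\N$ is locally free, I would take a polydisk $U \times W$ around $(z^{(0)}, c^{(0)})$ on which $\N|_{U \times W}$ is trivial and let $s_1, \ldots, s_r \in \Gamma(U \times W; \N)$ be a basis. Setting $s_{i,c} := s_i|_{U \times \{c\}} \in \Gamma(U; \M_{A,c})$ produces the desired family; the images at $z^{(0)}$ form a basis of $\CC_{z^{(0)}} \otimes M_{A,c}$ by the trivialization. The connection matrices arise by expanding $\partial_{z_j} s_i$ in the basis $\{s_k\}$ inside the free $\sho_{U \times W}$-module $\N|_{U \times W}$, so the coefficients automatically lie in $\sho_{U \times W}$ and hence depend holomorphically on $c$.

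The principal difficulty is the coherence step: $\N$ is naturally coherent only over the larger ring $\D_X \otimes \sho_{\CC^n_c}$, and upgrading to $\sho_{X \times \CC^n_c}$-coherence on $\Omega \times V$ requires a uniform degree bound on the generating monomials $\partial^{\alpha}$ across the family. This is essentially a parametric version of Adolphson's argument that the characteristic variety of $\M_{A,c}$ lies in the zero section of $T^*\Omega$ for non-resonant $c$, made uniform in $c$ via a parametric Bernstein-type filtration.
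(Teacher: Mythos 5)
Your approach---packaging the family $\M_{A,c}$ into a single sheaf $\N$ on $X\times\CC^n_c$ and trying to show $\N$ is locally free over $\sho_{\Omega\times V}$---is genuinely different from the paper's, which never introduces a parametric $\D$-module at all. But the central step is circular. You invoke Nakayama to conclude that, because the monomials $\overline{\partial^\alpha}$ generate the fiber of $\N$ at $(z^{(0)},c^{(0)})$, they generate $\N$ near that point as an $\sho_{X\times\CC^n_c}$-module, ``yielding coherence.'' Nakayama in this form \emph{presupposes} that the module is finitely generated over the local ring; it cannot be used to manufacture $\sho$-coherence from fiber generation. Indeed for a $\D$-module that is not $\sho$-coherent the conclusion is simply false: for $M=\D_\CC/\D_\CC z$ one has $zM=M$, so the $\sho$-fiber at the origin is zero while $M\neq 0$. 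So the coherence of $\N$ over $\sho$ on $\Omega\times V$---the very thing Nakayama needs as input---is exactly what you are trying to prove. You recognize this and call it the ``principal difficulty,'' proposing a ``parametric Bernstein-type filtration,'' but that is the whole content of the lemma and is left as a gesture. The asserted constancy of the $\sho$-fiber dimension at $\Vol_{\ZZ}(\Delta)$ over $\Omega\times V$, needed to pass from coherence to local freeness, is likewise stated without argument.

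The paper avoids the issue entirely by specializing at once to $z=z^{(0)}$ and reading off everything from Adolphson's explicit commutative-algebra presentation in \cite{A}: Theorem~4.4 identifies $M_{A,c}$ with $R[z_1,\dots,z_N]/\sum_i D_{i,c}R[z_1,\dots,z_N]$, Theorem~5.15 replaces $R$ by the Cohen--Macaulay semigroup ring $\widehat R=\CC[C(\Delta)\cap\ZZ^n]$ when $c$ is non-resonant, and the argument around (5.7) of \cite{A} produces a finite-dimensional monomial subspace $G\subset\widehat R$, \emph{independent of the non-resonant} $c$, with $\widehat R = G\oplus\sum_i D_{i,c,z^{(0)}}\widehat R$ and projection onto $G$ polynomial in $c$. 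The $c$-independence of $G$ is exactly the ``uniform bound'' your proposal lacks, and it is what delivers both the family of bases and the holomorphic dependence of the connection matrices. If you wanted to complete your route you would have to extract this $c$-uniform complement in any case, so you would end up rederiving the paper's argument inside the coherence step.
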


\begin{proof}
We recall the constructions in Adolphson \cite{A}. 
Let $R$ be the $\CC$-algebra 
$\CC [x^{a(1)}, \ldots, x^{a(N)}]$ generated 
by the monomials $x^{a(1)}, \ldots, x^{a(N)}$ 
over $\CC$ and consider the free 
$\CC [z_1, \ldots, z_N]$-module 
$R [z_1, \ldots, z_N]= \CC [z_1, \ldots, z_N] 
\otimes_{\CC} R$. We endow $R [z_1, \ldots, z_N]$ 
with a structure of a left $D(X)$-module by 
defining the action of 
$\frac{\partial}{\partial z_j}$ by 
\begin{equation}
\frac{\partial}{\partial z_j} 
( \sum_a c_a(z)x^a)= \sum_a 
\frac{\partial c_a}{\partial z_j} (z)x^a 
+ \sum_a c_a(z)x^{a+a(j)} 
\end{equation}
$( \sum_a c_a(z)x^a \in 
R [z_1, \ldots, z_N])$. For $1 \leq i \leq n$ and 
$c \in \CC^n$ we define a differential operator 
$D_{i,c}$ on $R [z_1, \ldots, z_N]$ by 
\begin{equation}
D_{i,c}=x_i \frac{\partial}{\partial x_i} +c_i+ 
x_i \frac{\partial h_z}{\partial x_i} (x). 
\end{equation}
Then by \cite[Theorem 4.4]{A} there exists an 
isomorphism 
\begin{equation}
M_{A,c} \simeq R [z_1, \ldots, z_N]/ 
\sum_{i=1}^n D_{i,c} R [z_1, \ldots, z_N]
\end{equation}
of left $D(X)$-modules. 
This implies that for the differential operators 
\begin{equation}
D_{i,c, z^{(0)}}= 
x_i \frac{\partial}{\partial x_i} +c_i+ 
x_i \frac{\partial h_{z^{(0)}}}{\partial x_i} (x)
\end{equation}
on $R= \CC [x^{a(1)}, \ldots, x^{a(N)}]$ we 
have an isomorphism 
\begin{equation}
\CC_{z^{(0)}} \otimes_{\CC [z_1, \ldots, z_N]} 
M_{A,c} \simeq R / \sum_{i=1}^n D_{i,c, z^{(0)}} R. 
\end{equation}
Let $C( \Delta ) \subset \RR^n$ be the cone 
generated by $\Delta$ and consider the 
semigroup $C( \Delta ) \cap \ZZ^n$ in it. 
Let $\widehat{R}$ be the $\CC$-algebra 
generated by the monomials $x^a$ $(a \in 
C( \Delta ) \cap \ZZ^n)$. 
Then $\widehat{R}$ is isomorphic to the 
semigroup algebra $\CC [C( \Delta ) \cap \ZZ^n 
]$ of $C( \Delta ) \cap \ZZ^n$ over $\CC$. 
Note that $R$ is a 
subring of $\widehat{R}$. 
By \cite[Theorem 5.15]{A}, if $c \in \CC^n$ 
is non-resonant there exists an 
isomorphism 
\begin{equation}
R / \sum_{i=1}^n D_{i,c, z^{(0)}} R \simto 
\widehat{R} / \sum_{i=1}^n D_{i,c, z^{(0)}} 
\widehat{R}. 
\end{equation}
As explained in the proof of 
\cite[Corollary 5.11]{A}, the ring 
$\widehat{R}$ is Cohen-Macaulay for every $A$. 
So, as explained after \cite[Proposition 5.13]{A}, 
the reasoning of the proof of 
\cite[Theorem 5.4]{A} with $R$ replaced by 
$\widehat{R}$ is valid for every $A$. 
In particular, as in \cite[(5.7)]{A}, 
one can construct a finite-dimensional
subspace $G \subset \widehat{R}$ 
independent of non-resonant $c$, such that 
\begin{equation}
\widehat{R} = G \oplus 
( \sum_{i=1}^n D_{i,c, z^{(0)}} 
\widehat{R}). 
\end{equation}
Recall that in \cite[(5.7)]{A} the basis 
of $G$ is given by monomials  $x^a$ $(a \in 
C( \Delta ) \cap \ZZ^n)$ in $\widehat{R}$. 
In this way, we obtain a basis 
\begin{equation}
t_{1}, \ldots, 
t_{\Vol_{\ZZ}( \Delta )} \in 
\CC_{z^{(0)}} \otimes_{\CC [z_1, \ldots, z_N]} 
M_{A, c}
\end{equation}
of the $\CC$-vector space 
$\CC_{z^{(0)}} \otimes_{\CC [z_1, \ldots, z_N]} 
M_{A, c}$. By the proof of \cite[(5.7)]{A} 
it extends to the local one 
\begin{equation}
s_{1,c}, \ldots, 
s_{\Vol_{\ZZ}( \Delta ),c} \in 
\Gamma (U; \M_{A, c} ) 
\end{equation}
of the integral connection $\M_{A, c}$ on a 
neighborhood $U$ of $z^{(0)} \in \Omega$. 
Note also that by the proof of 
\cite[(5.7)]{A} the projection of a given element of 
$\widehat{R}$ to $G$ polynomially 
depends on $c$. Then 
by recalling the definitions of the 
differential operators 
$\frac{\partial}{\partial z_j}$ on $R$ and 
$\widehat{R}$ we see that 
the connection matrices 
of $\M_{A, c}$ with respect to 
its local basis constructed above depend 
holomorphically on non-resonant $c \in \CC^n$. 
This completes the proof. 
\end{proof}

\begin{corollary}
For any $1 \leq j_0 \leq N$ the characteristic 
polynomial $\lambda_{j_0}^{\infty}(t) \in 
\CC [t]$ depends holomorphically on 
non-resonant $c \in \CC^n$.
\end{corollary}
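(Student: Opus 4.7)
The plan is to show that the monodromy matrix along $C$ of the local system $\shom_{\D_{X^{\an}}}((\M_{A,c})^{\an}, \sho_{X^{\an}})|_{\Omega^{\an}}$ depends holomorphically on non-resonant $c \in \CC^n$; the characteristic polynomial $\lambda_{j_0}^{\infty}(t) = \det (tI - M(c))$ is then polynomial in the entries of $M(c)$, and each of its coefficients will be holomorphic in $c$ as well.

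First, I would fix a base point $z^{*} \in C$ and apply Lemma \ref{HDO} at $z^{*}$ to obtain a basis $s_{1,c}, \ldots, s_{\Vol_{\ZZ}(\Delta),c}$ of the integrable connection $\M_{A,c}$ on a neighborhood of $z^{*}$ whose connection matrix depends holomorphically on non-resonant $c$. Dually this gives a basis of the fiber of the solution local system at $z^{*}$, depending holomorphically on $c$, in which I would express the monodromy matrix $M(c)$.

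Next, I would cover the circle $C \subset \Omega$ by finitely many open sets $V_1, \ldots, V_k$ (with $z^{*} \in V_1 \cap V_k$), on each of which Lemma \ref{HDO}, applied at a chosen point of $V_j$, provides a local basis $s^{(j)}_{i,c}$ of $\M_{A,c}$ with holomorphic-in-$c$ connection matrix. Horizontal sections of the solution local system over each chart satisfy a linear first-order ODE system whose coefficients are the entries of this connection matrix, so the parallel transport across each $V_j$ depends holomorphically on $c$ by the classical theorem on the parameter dependence of ODE solutions. On overlaps $V_j \cap V_{j+1}$ the change-of-basis matrix relating $s^{(j+1)}$ and $s^{(j)}$ is algebraically determined by these two bases, and depends holomorphically on $c$ as well, by the explicit construction of the bases in the proof of Lemma \ref{HDO}. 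The monodromy matrix $M(c)$, being a finite product of such parallel transports and change-of-basis matrices, therefore depends holomorphically on $c$, and $\lambda_{j_0}^{\infty}(t)$ follows suit.

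The hard part is verifying the holomorphic dependence of the transition matrices between distinct local applications of Lemma \ref{HDO}. This reduces to checking that the isomorphism $\widehat{R}/\sum_{i} D_{i,c,z^{(0)}} \widehat{R} \simto \CC_{z^{(0)}} \otimes_{\CC[z_1,\ldots,z_N]} M_{A,c}$, together with the projection onto the subspace $G$ from the proof of Lemma \ref{HDO}, depends holomorphically on both the base point $z^{(0)}$ and the parameter $c$. Since the operators $D_{i,c,z^{(0)}}$ themselves depend polynomially on $(z^{(0)}, c)$, this should follow by a careful reinspection of the explicit construction in the proof of Lemma \ref{HDO}.
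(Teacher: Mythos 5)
Your proposal is correct and takes essentially the same route as the paper: apply Lemma \ref{HDO} to get a $c$-holomorphic local frame of the connection $\M_{A,c}$, pass to the dual (the paper does this explicitly via $\M_{A,c}^*$ and $H^0\mathrm{Sol}_X(\M_{A,c})\simeq H^{-N}DR_X(\M_{A,c}^*)$), and invoke holomorphic parameter dependence of solutions (the paper calls this Cauchy--Kowalevsky with parameters) to conclude the monodromy matrix is holomorphic in $c$. You simply spell out more carefully the chart-covering and transition-matrix bookkeeping along the loop $C$ that the paper leaves implicit, and your closing remark about the $z^{(0)}$-dependence of the projection onto $G$ is indeed settled by the observation, noted at the end of the proof of Lemma \ref{HDO}, that $G$ is spanned by fixed monomials and the projection varies polynomially in the data.
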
 
\begin{proof}

Let $\M_{A,c}^*$ be the dual of 
the holonomic $\D_X$-module $\M_{A,c}$ (see 
\cite[Section 2.6]{H-T-T}). Recall that 
on $\Omega$ it is nothing but 
the dual connection of $\M_{A,c}$. 
Then by \cite[Proposition 4.2.1]{H-T-T} 
we have an isomorphism 
\begin{equation} 
 H^0{\rm Sol}_X(\M_{A, c}) \simeq 
H^{-N}DR_X( \M_{A,c}^*).  
\end{equation}
Moreover on $\Omega^{\an}$ the cohomology sheaf 
$H^{-N}DR_X( \M_{A,c}^*)$ is isomorphic to the 
local system consisting of the horizontal 
sections of the analytic connection 
$(\M_{A,c}^*)^{\an}$ associated to 
$\M_{A,c}^*$. 
Then by Lemma \ref{HDO} and the Cauchy-Kowalevsky 
theorem (with parameters), for any 
$z^{(0)} \in \Omega^{\an}$ there exists 
a basis 
\begin{equation}
u_{1}(z,c), \ldots, 
u_{\Vol_{\ZZ}( \Delta )}(z,c) \in 
H^0 {\rm Sol}_X (\M_{A, c})_{z^{(0)}} 
\end{equation}
of the $\CC$-vector space 
$H^0 {\rm Sol}_X (\M_{A, c})_{z^{(0)}} 
\simeq \CC^{\Vol_{\ZZ}( \Delta )}$ over 
$\CC$ which depends holomorphically on 
non-resonant $c \in \CC^n$. This implies 
that for any $1 \leq j_0 \leq N$ 
the monodromy matrix for 
$\lambda_{j_0}^{\infty}(t)$ 
has the holomorphic dependence on 
non-resonant $c \in \CC^n$. 
\end{proof}
By this corollary,  it 
suffices to prove Theorem \ref{MTM} 
only for ``generic'' non-resonant $c \in \CC^n$. 

\medskip 
Now we return to the proof of 
Theorem \ref{MTM}. First let us 
consider the case $n=2$. 
For simplicity, here we treat only the 
case where $r=1$ and $a(j_0) \in \Int (\Delta_1)$. 
The general case can be proved similarly. 
Let $h >0$ be the lattice height of 
the origin 
$0 \in \Delta \subset \RR^2$ from 
the $1$-dimensional facet 
$\Delta_1 \prec \Delta$ 
(for the definition see Sections 
\ref{sec:1} and  \ref{sec:2}). Then 
we have the equality $\Vol_{\ZZ} 
( \widehat{\Delta_1} )=h \cdot \Vol_{\ZZ}(\Delta_1)$. 
Let $d_1 >0$ (resp. $d_2 >0$) be the lattice 
height of the point $a(j_0) \in \Int (\Delta_1)$ 
from the vertex facet $\Gamma_{11}$ (resp. 
$\Gamma_{12}$) of the segment $\Delta_1$. 
Here we define the lattice 
heights $d_j$ in the affine span 
$\Aff ( \Delta_1) \simeq \RR^1$ 
of $\Delta_1$ in $\RR^2$. 
Note that we have $\Vol_{\ZZ}(\Delta_1)=d_1+d_2$ 
and 
\begin{equation}
h \cdot d_j=h_{1j} \cdot \Vol_{\ZZ} 
( \widehat{\Gamma_{1j}} ) \qquad 
(j=1,2). 
\end{equation}
By a suitable automorphism 
$\Psi : ( \RR^2, \ZZ^2 ) \simeq ( \RR^2, \ZZ^2 )$ 
of $( \RR^2, \ZZ^2 )$ we may assume that 
$\Delta_1 \subset \{ v=(v_1,v_2) \ | \ 
v_2=h \} \subset \RR^2$ and $\Gamma_{11} 
=(p,h)$, $\Gamma_{12}=(q,h)$ for some 
integers $p, q \in \ZZ$ such that 
$p-q= \Vol_{\ZZ}(\Delta_1)=d_1+d_2$. 
In this situation, let $\Sigma$ be a 
smooth subdivision of the dual fan 
of $\Delta$ in $\RR^2$. Let 
$\tau \in \Sigma$ be the $1$-dimensional 
cone which corresponds to the facet 
$\Delta_1 \subset \{ v_2=h \} \subset \RR^2$ 
of $\Delta$ and $\sigma_1 \in \Sigma$ 
(resp. $\sigma_2 \in \Sigma$) the unique 
$2$-dimensional cone containing $\tau$ 
which corresponds to the 
vertex $\Gamma_{11}$ (resp. 
$\Gamma_{12}$) of $\Delta$. Note that the 
primitive vectors on the two edges of 
$\sigma_1$ are 
\begin{equation} 
b_1= 
\left(  \begin{array}{c}
      -1 \\
      k 
    \end{array}  \right), \ \qquad \ 
b_2= 
\left(  \begin{array}{c}
      0 \\
      -1 
    \end{array}  \right) 
\end{equation} 
for some integer $k \in \ZZ$. Let 
$Z_{\Sigma}$ be the smooth toric variety 
associated to $\Sigma$ and 
$\CC^2( \sigma_1) 
\simeq \CC^2_y \subset Z_{\Sigma}$ 
its affine open subset associated to 
the $2$-dimensional cone $\sigma_1 
\in \Sigma$. On $\CC^2( \sigma_1) 
\simeq \CC^2_y$ the multi-valued function 
$x_1^{c_1-1}x_2^{c_2-1}$ can be written as 
\begin{equation}\label{R1} 
x_1^{c_1-1}x_2^{c_2-1}=
y_1^{ \langle b_1, c-e \rangle} \cdot 
y_2^{ \langle b_2, c-e \rangle }, 
\end{equation}
where we set 
\begin{equation}
e= 
\left(  \begin{array}{c}
      1 \\
      1 
    \end{array}  \right). 
\end{equation}
Moreover on $\CC^2( \sigma_1) 
\simeq \CC^2_y$ 
our Laurent polynomial 
$h_z(x)= \sum_{j=1}^N z_j x^{a(j)}$ 
can be written in the form 
\begin{equation}\label{R2} 
h_z(y)=y_1^{kh-p}y_2^{-h} \times 
 \tl{h_z}(y), 
\end{equation}
where $\tl{h_z}(y)$ is a 
polynomial i.e. its Newton polytope is 
contained in the first quadrant 
$\RR^2_+$ of $\RR^2$. Note also 
that the $T$-orbit $T_1 \simeq \CC^*$ 
in $Z_{\Sigma}$ which corresponds to 
$\tau \in \Sigma$ is 
$\{ y=(y_1,y_2) \ | \ y_1 \in \CC^*, 
y_2=0 \} \simeq \CC^*_{y_1} 
\subset \CC^2( \sigma_1 )$. Now 
let $g(y_1)$ be the restriction of 
the polynomial $\tl{h_z}(y)$ 
to the $T$-orbit $T_1 \simeq \CC^*_{y_1}$ 
in $Z_{\Sigma}$. It is easy to see that 
this polynomial $g(y_1)$ of $y_1$ contains 
the term $z_{j_0}y_1^{d_1}$ and its 
Newton polytope is the closed 
interval $[0,p-q]=[0,d_1+d_2]$ in $\RR^1$. 
Note that $g(y_1)$ is naturally identified 
with the $\Delta_1$-part $h_z^{\Delta_1}$ of $h_z$. 
By the non-degeneracy of $h_z$ we have 
$\#  \{ y_1 \in T_1 \ | \ g(y_1)=0 \} 
= \Vol_{\ZZ}(\Delta_1)=p-q= d_1+d_2$. Let 
$Z= \tl{Z_{\Sigma}} \longrightarrow 
Z_{\Sigma}$, $\pi : \tl{Z} \longrightarrow Z$, 
$Q_z \subset \tl{D}$, $\iota : T^{\an} 
\hookrightarrow \tl{Z}$ etc. be as before. 
We denote the strict transform of 
$T_1 \simeq \CC^* \subset Z_{\Sigma}$ 
in $Z= \tl{Z_{\Sigma}}$ 
also by $T_1$. Let $W$ be a sufficiently 
small tubular neighborhood of $T_1$ 
in $Z= \tl{Z_{\Sigma}}$ and set 
$W^{\circ} = W \cap T^{\an}$.

\begin{center}
\noindent\includegraphics[height=6cm]{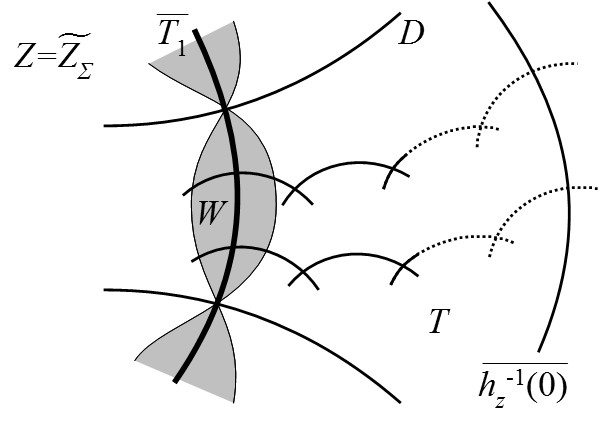}

Fig. 1: The situation in the blow-up $Z$
\end{center}

Then in \cite[Theorem 6.4]{E-T-2} 
we showed that if $c \in \CC^2$ is generic 
\begin{equation}
H_2^{\rd}(W^\circ)_z = 
H_2(W^\circ \cup Q_z, Q_z; \iota_*(\LL))
\end{equation}
is a linear subspace of $H_2^{\rd}(T)_z =
 H_2(T^{\an} \cup Q_z, Q_z; \iota_*(\LL))$.  Moreover 
in \cite[Proposition 6.2]{E-T-2} and 
the paragraph just after it, 
we constructed a natural basis of this 
$\Vol_{\ZZ}(\widehat{\Delta_1})$-dimensional vector space 
$H_2^{\rd}(W^\circ)_z$.  Recall that 
this basis was constructed 
by $d_1 + d_2$ figure-8s in $T_1 
\simeq \CC^*$ associated to the 
$d_1 + d_2$ points $\{y_1 \in T_1 
\mid g(y_1) = 0 \}$ in $T_1$.  
Let $C \subset  \mathbb{L}\simeq \CC$ be 
a sufficiently large circle in 
$\mathbb{L}\simeq \CC$ and for a 
point $z = (z_1, \ldots, z_N) \in C$ set 
$V = H_2^{\rd}(W^\circ)_z \subset H_2^{\rd}(T)_z$.  
Let $\Phi: V \simto V$ be the 
automorphism of $V$ induced by 
the rotation of the point $z_{j_0} \in C \subset 
\mathbb{L}\simeq \CC$ 
along $C$.  Then it suffices to show 
that the characteristic 
polynomial of $\Phi$ is equal to
\begin{equation}
\prod_{j=1}^2 \Bigl\{ 
t^{h_{1j}}- \exp (-2 \pi \sqrt{-1} 
\langle \rho_{1j}, c \rangle ) 
\Bigr\}^{\Vol_{\ZZ}(\widehat{\Gamma_{1j}})}.  
\end{equation}
Now let us set 
\begin{equation}
\tl{g}(y_1)=z_{j_0}-y_1^{-d_1}g(y_1). 
\end{equation}
Then the Newton polytope of 
the Laurent polynomial $\tl{g}(y_1)$ 
is the closed interval $[-d_1, d_2]$ in 
$\RR^1$ and for any $y_1 \in T_1 \simeq \CC^*$ 
we have an equivalence 
\begin{equation}
g(y_1)=0 \ \Longleftrightarrow \ 
\tl{g}(y_1)=z_{j_0}. 
\end{equation}
Since $|z_{j_0}|$ is sufficiently large, the set 
$\{ y_1 \in T_1 \ | \ g(y_1)=0 \} = 
\{ y_1 \in T_1 \ | \ \tl{g}(y_1)=z_{j_0} \}$ 
splits into two parts, i.e. the one 
consisting of the $d_1$ points 
$q_1, \ldots, q_{d_1} \in T_1$ in a 
neighborhood of the origin of $\CC_{y_1}$ 
and the other consisting of the remaining 
$d_2$ points $q_1^{\prime}, 
\ldots, q_{d_2}^{\prime} \in T_1$ 
at infinity. By this splitting, we can 
slightly modify the construction of the basis 
of $V = H_2^{\rd}(W^\circ)_z$ in 
\cite[Section 6]{E-T-2} so that we 
have a direct sum decomposition $V = V_1 \oplus V_2$ of $V$, 
where $V_1 \simeq \CC^{h \cdot d_1}$ 
(resp. $V_2 \simeq \CC^{h \cdot d_2}$)  
has a basis consisting of $h \cdot d_1$ 
(resp. $h \cdot d_2$) 
rapid decay 2-cycles over the $d_1$ (resp. $d_2$) 
figure-8s associated 
to the $d_1$ points $q_1,\ldots,q_{d_1} 
\in T_1 \simeq \CC^*$ 
(resp. the $d_2$ points 
$q'_1,\ldots,q'_{d_2} \in T_2 \simeq \CC^*$) as follows. 
Let us explain the construction of the 
vector space $V_1$. 
By homotopy we may assume that for 
some $0 < \varepsilon \ll 1$ we have 
\begin{equation}
q_i = \varepsilon \exp \left( 2\pi 
\sqrt{-1}  \frac{d_1-i+1}{d_1}  
\right) \in T_1 = \CC^*_{y_1}\quad (1\leq i \leq d_1).
\end{equation}
Note that by the rotation of the 
point $z_{j_0} \in C \subset  
\mathbb{L}\simeq \CC$ along $C$ 
the point $q_i\ (1 \leq i \leq d_1-1)$ 
(resp. $q_{d_1}$) is sent to $q_{i+1}$ 
(resp. $q_1$). Let $F_i (1\leq i \leq d_1-1)$ 
(resp. $F_{d_1}$) be a figure-8 
surrounding the two points 
$q_i$ and $q_{i+1}$ (resp. $q_{d_1}$ and $q_1$) 
(see \cite[Section 6]{E-T-2}). 
\medskip

\begin{center}
\noindent\includegraphics[height=6cm]{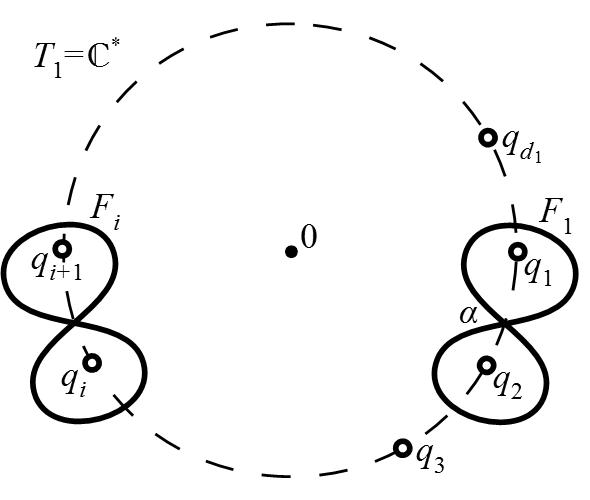}

Fig. 2: The construction of Figure-8s $F_i$
\end{center}

\medskip \noindent We take such figure-8s $F_1, F_2 , 
\ldots, F_{d_1} \subset T_1 = \CC^*_{y_1}$ 
so that by the rotation of $z_{j_0} \in C$ the $i-$th one 
$F_i (1 \leq i \leq d_i-1)$ 
(resp. the $d_1$-th one $F_{d_1}$)  is 
 sent to $F_{i+1}$ (resp. $F_1$). For the unique singular point 
$\alpha \in F_1$ of the figure-8 $F_1$ 
choose a normal slice 
$S_\alpha \simeq \CC_{y_2} \subset 
\CC^2(\sigma_1) \simeq \CC^2_y$ of 
$\overline{T}_1 \simeq \CC_{y_1} 
\subset \CC^2(\sigma_1) \simeq 
\CC^2_y$ at $\alpha\in \overline{T}_1$. Then 
$\pi^{-1}(S_\alpha) \subset \tl{Z}$ is 
isomorphic to the real oriented 
blow-up $\tl{\CC_{y_2}}$ of the complex 
plane $\CC_{y_2}$ along 
the origin $\{ 0 \} \subset \CC_{y_2}$ 
and the open subset 
$Q_z \cap \pi^{-1}(S_\alpha)$ of $\tl{D} 
\cap \pi^{-1}(S_\alpha) \simeq S^1$ 
consists of $h$ open intervals 
$I_1, I_2, \ldots, I_h \subset \tl{D}  
\cap \pi^{-1}(S_\alpha) \simeq S^1$.  
We may assume that 
$I_1, I_2, \ldots, I_h$ are arranged in 
the clockwise order.  
Let $\iota_0: S_\alpha \setminus \{\alpha\} \simeq \CC_{y_2}^* 
\hookrightarrow \tl{\CC_{y_2}}$ be the inclusion map.  
Then by \cite[Lemma 3.6(i)]{E-T-2} 
there exists a natural 
basis of the $h$-dimensional 
vector space 
\begin{equation} 
H_1(\CC_{y_2}^* 
\cup (I_1 \sqcup \cdots \sqcup I_h), 
I_1 \sqcup \cdots \sqcup 
I_h; (\iota_0)_*(\LL|_{S_\alpha 
\setminus\{\alpha\} })) 
\end{equation}
consisting of $h$ 
(twisted) 1-chains 
$\gamma_1, \gamma_2, \ldots, \gamma_h$ 
connecting two 
consecutive intervals among 
$I_1, \cdots, I_h$.  
\medskip

\begin{center}
\noindent\includegraphics[height=6cm]{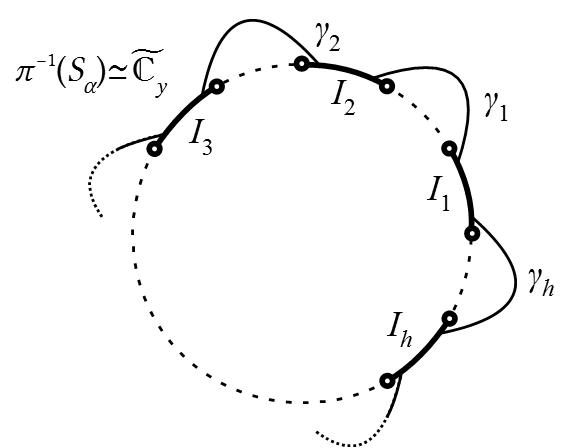}

Fig. 3: The twisted $1$-chains $\gamma_i$
\end{center}

\medskip \noindent We may assume 
that $\gamma_1, \gamma_2, \ldots, \gamma_h 
\subset \pi^{-1}(S_\alpha) \simeq \tl{\CC_{y_2}}$ 
are arranged in the clockwise order 
and the sections of 
the local system 
$(\iota_0)_*(\LL|_{S_\alpha \setminus\{\alpha\} })$ 
in the ``twisted'' 1-chains 
$\gamma_2, \ldots, \gamma_h$ are defined by 
the analytic continuations (in the clockwise direction) of 
that in the first one $\gamma_1$.  
As in the paragraph just after 
\cite[Proposition 6.2]{E-T-2}, 
by dragging $\gamma_1, \gamma_2, \ldots, \gamma_h$ 
over the figure-8 $F_1$ keeping their 
end points in $Q_z \subset \tl{D}$ 
we can naturally 
construct $h$ rapid decay 2-cycles $\delta_{11}, 
\delta_{12}, \ldots, \delta_{1h}$ over $F_1$ 
in $V = H_2^{\rd}(W^\circ)_z = H_2(W^\circ \cup Q_z, 
Q_z; \iota_*(\LL))$.  
By the rotation of the point 
$z_{j_0} \in C \subset 
\mathbb{L}\simeq \CC$ along $C$ (and the corresponding 
analytic continuations of the 
sections of $\iota_*(\LL)$) 
we obtain $h$ rapid decay 2-cycles $\delta_{21}, 
\delta_{22}, \ldots, \delta_{2h}$ over the figure-8 
$F_2$ in $V = H_2^{\rd}(W^\circ)_z$.  By repeating this 
construction, for any $1 \leq i \leq d_1$ we obtain 
\begin{equation}
\delta_{i1}, \delta_{i2},\ldots,
\delta_{ih} \in V = H_2^{\rd}(W^\circ)_z.
\end{equation}
As in \cite[Section 6]{E-T-2} we can show 
that if $c \in \CC^2$ is generic these $h \cdot d_1$ 
rapid decay 2-cycles in $V = 
H_2^{\rd}(W^\circ)_z \subset H_2^{\rd}(T)_z$ 
are linearly independent.  
We denote by $V_1 \simeq \CC^{h \cdot d_1}$ 
the linear subspace of $V$ spanned by them.  
Similarly by using 
the remaining $d_2$ points 
$q'_1,\ldots,q'_{d_2} \in T_1 
\simeq \CC^*_{y_1}$ at infinity, 
we obtain a linear subspace 
$V_2 \simeq \CC^{h \cdot d_2}$ 
of $V$ so that we have a direct 
sum decomposition $V = V_1 \oplus V_2$ of $V$.  
We can easily see that $V_1, V_2 \subset V$ are invariant 
by the automorphism $\Phi: 
V \simto V$ of $V$ induced by 
the rotation of the point $z_{j_0} \in C$.  
From now on, we 
will show that the characteristic polynomial of 
$\Phi_1 = \Phi|_{V_1}: V_1 \simto V_1$ is equal to 
\begin{equation}
\Bigl\{ 
t^{h_{11}}- \exp (-2 \pi \sqrt{-1} 
\langle \rho_{11}, c \rangle ) 
\Bigr\}^{\Vol_{\ZZ}(\widehat{\Gamma_{11}})}.  
\end{equation}
By our construction of the basis $\delta_{ij}$ 
$(1 \leq i \leq d_1, 1 \leq j \leq h)$ of $V_1 \subset V 
= H_2^{\rd}(W^\circ)_z$ for any $1 \leq i \leq d_1-1$ 
and $1 \leq j \leq h$ we have 
$\Phi_1(\delta_{ij}) = \delta_{i+1,j}$.  
For $m \in \ZZ$ we define two integers
$\{ m \}_h$ and $[m]_h$
such that $1 \leq [m]_h \leq h$ and
$m= h \cdot \{ m \}_h + [m]_h$. Then 
by \eqref{R1} and \eqref{R2} 
for any $1 \leq j \leq h$ we have
$\Phi_1 ( \delta_{d_1 j})=
\varepsilon_1 \cdot \varepsilon_2^{\{ j+p \}_h}
\cdot \delta_{1, [j+p]_h}$,
where we set $\varepsilon_1 = 
\exp(2 \pi \sqrt{-1} c_1)$ 
and $\varepsilon_2 = \exp(- 2 \pi \sqrt{-1} c_2)$.  
Define a square matrix $K \in M_h(\CC)$ of size $h$ by 
\begin{equation}
K = \varepsilon_1 \left( \begin{array}{ccc|c}
0&\cdots & 0 & \varepsilon_2 \\ \hline 
&&& 0 \\
&I_{h-1} && \vdots \\
&&& 0 \\
\end{array} \right)^p \in M_h(\CC),
\end{equation}
where $I_{h-1}$ is the $(h-1) \times 
(h-1)$ identity matrix.
Then by the basis $\delta_{ij}$ $(1 \leq i \leq d_1, 1 
\leq j \leq h)$ of $V_1$, the automorphism $\Phi_1: 
V_1 \simto V_1$ of $V_1$ is 
represented by the following 
square matrix $L \in M_{h\cdot d_1}(\CC)$ 
of size $h \cdot d_1$:
\begin{equation}
L = \left(\begin{array}{cccccc}
0&&&&& K\\
I_h &&&&&0\\
0 & I_h &&\ubigzero&& \vdots\\
\vdots & 0 &\ddots&&&\vdots\\
\vdots  & \vdots &&\ddots&&\vdots\\
0 & 0 & \cdots & 0 & I_h & 0
\end{array}\right) \in M_{h\cdot d_1}(\CC),
\end{equation}
where $I_h \in M_h( \CC )$ stands 
for the identity matrix of size $h$. 
It follows that the characteristic 
polynomial of the automorphism 
$\Phi_1 = \Phi |_{V_1} : V_1 \simto V_1$ 
of $V_1$ is equal to 
\begin{equation}
\prod_{\zeta^h= \exp (-2 \pi \sqrt{-1}c_2)}
\Bigl\{ 
t^{d_1}- \exp (2 \pi \sqrt{-1} c_1 ) \cdot \zeta^p 
\Bigr\}. 
\end{equation}
It is easy to see that it can be rewritten as 
\begin{equation}
\Bigl\{ 
t^{h_{11}}- \exp (-2 \pi \sqrt{-1} 
\langle \rho_{11}, c \rangle ) 
\Bigr\}^{\Vol_{\ZZ}(\widehat{\Gamma_{11}})}.  
\end{equation}
This completes the proof for the case $n=2$. 

\medskip 
Let us consider the general case $n \geq 2$. 
For simplicity, here we consider only the case 
where $r=1$ and $a(j_0) \in \Int (\Delta_1)$. The proof 
for the general case is similar. 
Let $\Sigma$, $Z_\Sigma$, 
$Z = \tl{Z_\Sigma} \to Z_\Sigma$, 
$\pi: \tl{Z} \to Z$, $Q_z \subset \tl{D}$, 
$\iota: T^{\an} \hookrightarrow \tl{Z}$ 
etc. be as before.  We denote by 
$T_1 \simeq (\CC^*)^{n-1}$ the 
$T$-orbit in $Z_\Sigma$ associated 
to the facet $\Delta_1 \prec \Delta$ 
of $\Delta$.  We also denote by 
$T_1 \simeq (\CC^*)^{n-1}$ its 
strict transform in $Z = 
\tl{Z_\Sigma}$.  Let $W$ be a 
sufficiently small tubular 
neighborhood of $T_1$ in 
$Z = \tl{Z_\Sigma}$, and set $W^\circ = W \cap T^{\an}$. 
Recall that in 
\cite[Theorem 7.6]{E-T-2} 
we showed that if $c \in \CC^n$ is generic 
\begin{equation}
H^{\rd}_n(W^\circ)_z = 
H_n(W^\circ \cup Q_z, Q_z; \iota_*(\LL))
\end{equation}
is a linear subspace of $H_n^{\rd}(T)_z$.  Let $C \subset 
\mathbb{L}\simeq \CC$ be a sufficiently large circle in  
$\mathbb{L}\simeq \CC$ and for 
a point $z \in C$ set $V = H_n^{\rd}(W^\circ)_z 
\subset H_n^{\rd}(T)_z$.  
Let $\Phi : V \simto V$ be the automorphism 
of $V$ induced by the rotation of the point 
$z_{j_0} \in C$. Recall that the dimension 
of the vector space $V$ is equal to
\begin{equation}
\Vol_{\ZZ} 
( \widehat{\Delta_1} )=h \cdot \Vol_{\ZZ}(\Delta_1) = 
 \sum_{j=1}^{m_1}h_{1j} \cdot \Vol_{\ZZ} 
( \widehat{\Gamma_{1j}} ),
\end{equation}
where $h > 0$ is the lattice height of the origin 
$0 \in \Delta \subset \RR^n$ from the 
facet $\Delta_1 \prec \Delta$ 
(for the definition see 
Sections \ref{sec:1} and  \ref{sec:2}). 
For $1 \leq j \leq m_1$ let $d_j>0$ be the 
lattice height of the point $a(j_0) \in 
\Delta_1$ from the facet $\Gamma_{1j} \prec \Delta_1$. 
Here we define the lattice 
heights $d_j$ in the affine span 
$\Aff ( \Delta_1) \simeq \RR^{n-1}$ 
of $\Delta_1$ in $\RR^n$ 
so that we have the equality 
$h_{1j} \cdot \Vol_{\ZZ} 
( \widehat{\Gamma_{1j}} )=h \cdot d_j \cdot 
\Vol_{\ZZ}( \Gamma_{1j} )$. 
Then we can slightly modify the 
construction of the basis of $V$ 
in \cite[Section 7]{E-T-2} 
with the help of our results in 
Section \ref{sec:2} to obtain a direct sum 
decomposition 
\begin{equation}
V= \oplus_{j=1}^{m_1} V_j
\end{equation}
of $V$ by some vector subspaces $V_j$, where 
$\dim V_j= h \cdot d_j \cdot 
\Vol_{\ZZ}( \Gamma_{1j} )$ and 
$V_j$ has a basis consisting of 
$h \cdot d_j \cdot 
\Vol_{\ZZ}( \Gamma_{1j} )$ 
rapid decay $n$ cycles associated to the facet 
$\Gamma_{1j}$ of $\Delta_1$. 
Note that by our Morse theoretical 
construction of the basis of $V_j$ 
we have a filtration of the $\CC$-vector space 
$V_j$ with $\Vol_{\ZZ}( \Gamma_{1j} )$ 
subquotients of dimension $h \cdot d_j$. 
Moreover as in the case 
$n=2$ we can show that 
the characteristic polynomial of  
the automorphism of each 
subquotient induced by $\Phi|_{V_j} : V_j 
\simto V_j$ is equal to 
\begin{equation}
\Bigl\{ 
t^{h_{1j}}- \exp (-2 \pi \sqrt{-1} 
\langle \rho_{1j}, c \rangle ) 
\Bigr\}^{l_j}, 
\end{equation}
where $l_j>0$ is the lattice height of 
the origin $0 \in \widehat{\Gamma_{1j}}$ 
from the facet $\Gamma_{1j} 
\prec \widehat{\Gamma_{1j}}$. 
Then the assertion immediately follows. 
This completes the proof. 

\begin{remark}
For $1 \leq j_1< j_2 < \cdots < j_k \leq N$ 
$(k>0)$ assume that no two of the $k$ 
points $a(j_1), \ldots, a(j_k)$ lie on the 
same facet of $\Delta$ not containing the 
origin $0 \in \RR^n$. Let $z^{(0)}=(z_1^{(0)}, 
\ldots, z_N^{(0)}) \in X= \CC^N$ be a point 
such that $z_j^{(0)} \not= 0$ if and only if 
$j \in \{ j_1, \ldots, j_k \}$. Let $\mathbb{L} 
\simeq \CC$ be a complex line in $X= \CC^N$ 
parallel to the one generated by the non-zero 
vector $z^{(0)} \not= 0$ such that 
\begin{equation}
\#  \{ \mathbb{L} \cap 
( X \setminus \Omega ) \} 
< + \infty 
\end{equation}
and $C \subset \mathbb{L} \simeq 
\CC$ a sufficiently large circle in it. 
Then by the above proof, also for the 
monodromy of the confluent 
$A$-hypergeometric functions along such 
$C$ we obtain a straightforward 
generalization of Theorem \ref{MTM}. 
\end{remark}


\begin{thebibliography}{99}


\bibitem{AC}
N. A'Campo, 
La fonction z\^eta d'une monodromie, 
\textit{Comment. Math. Helv.}, \textbf{50} 
 (1975), 233-248. 

\bibitem{A}
A. Adolphson, 
Hypergeometric functions and rings 
generated by monomials, 
\textit{Duke Math. Journal}, \textbf{73} 
 (1994), 269-290. 

\bibitem{A-K}
K. Aomoto and M. Kita, 
\textit{Theory of hypergeometric functions}, 
Monographs in Mathematics, Springer-Verlag, 
2011.

\bibitem{B}
F. Beukers, 
Monodromy of $A$-hypergeometric functions, 
arXiv:1101.0493v2. 

\bibitem{B-H}
L. Borisov and P. Horja, 
Mellin-Barnes integrals as 
Fourier-Mukai transforms, 
\textit{Adv. in Math.}, \textbf{207} 
 (2006), 876-927. 

\bibitem{C-G}
F.J. Castro-Jim\'enez and M. Granger, 
Gevrey expansions of hypergeometric integrals I, 
to apper in \textit{Int. Math. Res. Not}. 

\bibitem{Esterov}
A. Esterov, 
Index of a real singular point and its Newton diagram, 
\textit{Moscow Univ. Math. Bull.}, \textbf{58} (2003), 
7-11.

\bibitem{E-T-1}
A. Esterov and K. Takeuchi, Motivic Milnor 
fibers over complete intersection 
varieties and their virtual Betti 
numbers, \textit{Int. Math. Res. Not}, 
\textbf{2012}, No. 15 (2012), 3567-3613.

\bibitem{E-T-2}
A. Esterov and K. Takeuchi, 
Confluent $A$-hypergeometric functions 
and rapid decay homology cycles, arXiv:1107.0402v4, 
to appear in American J. Math. 

\bibitem{Fulton}
W. Fulton, \textit{Introduction to 
toric varieties}, Princeton University 
Press, 1993.

\bibitem{G-K-Z-1}
I.M. Gelfand, M. Kapranov 
and A. Zelevinsky, Hypergeometric 
functions and toral manifolds, 
\textit{Funct. Anal. Appl.}, \textbf{23} 
(1989), 94-106.

\bibitem{G-K-Z-2}
I.M. Gelfand, M. Kapranov 
and A. Zelevinsky, Generalized Euler 
integrals and $A$-hypergeometric 
functions, \textit{Adv. in Math.}, 
\textbf{84} (1990), 255-271.

\bibitem{G-K-Z}
I.M. Gelfand, M. Kapranov 
and A. Zelevinsky, 
\textit{Discriminants, resultants 
and multidimensional determinants}, 
Birkh{\"a}user, 1994.

\bibitem{G-L-M}
S. M, Gusein-Zade, I. Luengo and A. Melle-Hernandez, 
Partial resolutions and the zeta-function of 
a singularity, \textit{Comment. Math. Helv.}, 
\textbf{72} (1997), 244-256. 

\bibitem{H-1}
M. Hien, 
Periods for irregular singular connections 
on surfaces, 
\textit{Math. Ann.}, \textbf{337} 
 (2007), 631-669. 

\bibitem{H-2}
M. Hien, 
Periods for flat algebraic connections, 
\textit{Invent Math.}, \textbf{178} 
 (2009), 1-22. 

\bibitem{H-R}
M. Hien and C. Roucairol, 
Integral representations for solutions 
of exponential Gauss-Manin systems, 
\textit{Bull. Soc. Math. Fr.}, \textbf{136} 
 (2008), 505-532. 

\bibitem{Horja}
P. Horja, Hypergeometric functions and mirror 
symmetry in toric varieties, 
arXiv:math/9912109v3. 

\bibitem{Hotta}
R. Hotta, 
Equivariant $\D$-modules, 
arXiv:math/9805021v1. 

\bibitem{H-T-T}
R. Hotta, K. Takeuchi and T. Tanisaki, 
\textit{D-modules, perverse 
sheaves and representation theory}, 
Progress in Math., Birkh{\"a}user, Boston, 2008.

\bibitem{K-H-T}
H. Kimura, Y. Haraoka and K. Takano, 
The generalized confluent hypergeometric functions, 
\textit{Proc. Japan Acad.}, \textbf{68} 
 (1992), 290-295. 

\bibitem{Kushnirenko}
A.G. Kouchnirenko, Poly\`edres 
de Newton et nombres de Milnor, 
\textit{Invent. Math.}, \textbf{32} (1976), 1-31.

\bibitem{L-S}
A. Libgober and S. Sperber, 
On the zeta function of monodromy of a 
polynomial map, \textit{Compositio Math.}, 
\textbf{95} (1995), 287-307. 

\bibitem{M-T-1}
Y. Matsui and  K. Takeuchi, 
A geometric degree formula for
$A$-discriminants and Euler 
obstructions of toric varieties, 
\textit{Adv. in Math. }, 
\textbf{226} (2011), 2040-2064.

\bibitem{M-T-2}
Y. Matsui and K. Takeuchi, 
Milnor fibers 
over singular toric varieties and 
nearby cycle sheaves, 
\textit{Tohoku Math. Journal}, 
\textbf{63} (2011), 113-136.

\bibitem{M-T-3}
Y. Matsui and K. Takeuchi, 
Monodromy at infinity, Newton polyhedra and 
constructible sheaves, 
\textit{Mathematische Zeitschrift}, 
\textbf{268} (2011), 409-439.

\bibitem{M-T-4}
Y. Matsui and K. Takeuchi, Monodromy 
at infinity of polynomial maps and 
Newton polyhedra, 
with Appendix by C. Sabbah, 
\textit{Int. Math. Res. Not}, 
\textbf{2013}, No. 8 (2013), 1691-1746. 

\bibitem{M-T-5}
Y. Matsui and K. Takeuchi, 
Motivic Milnor fibers and Jordan 
normal forms of Milnor monodromies, 
\textit{Publ. Res. Inst. Math. Sci.}, 
\textbf{50}, No. 2 (2014), 207-226.

\bibitem{Oda}
T. Oda, \textit{Convex bodies and 
algebraic geometry. An introduction to 
the theory of toric varieties}, Springer-Verlag, 1988.

\bibitem{Oka}
M. Oka, 
\textit{Non-degenerate complete 
intersection singularity}, Hermann, 
Paris, 1997.

\bibitem{Paj}
A. Pajitnov, 
\textit{Circle-valued Morse theory}, 
de Gruyter Studies in Mathematics, 
\textbf{32}, 2006.

\bibitem{Pham}
F. Pham, 
Le descente des cols par les onglets de 
Lefschetz, avec vues sur Gauss-Manin, 
\textit{Ast\'erisque}, \textbf{130} 
 (1985), 14-47.  

\bibitem{S1}
M. Saito, 
Irreducible quotients of 
$A$-hypergeometric systems, 
\textit{Compositio Math.}, 
\textbf{147} (2011), 
613-632.

\bibitem{S-S-T} M. Saito, B. Sturmfels 
and N. Takayama, \textit{Gr{\"o}bner deformations 
of hypergeometric differential equations}, 
Springer-Verlag, 2000. 

\bibitem{S-W}
M. Schulze and U. Walther, 
Irregularity of hypergeometric systems 
via slopes along coordinate subspaces, 
\textit{Duke Math. Journal}, \textbf{142} 
(2008), 465-509.

\bibitem{SW1}
M. Schulze and U. Walther, 
Hypergeometric $D$-modules and twisted 
Gauss-Manin systems, 
\textit{Journal of Algebra}, \textbf{322} 
(2009), 3392-3409. 

\bibitem{SW2}
M. Schulze and U. Walther, 
Resonance equals reducibility for 
$A$-hypergeometric systems, 
\textit{Algebra Number Theory}, 
\textbf{6} (2012), 527-537. 

\bibitem{T-1}
K. Takeuchi, 
Monodromy at infinity of 
$A$-hypergeometric functions 
and toric compactifications, 
\textit{Mathematische Annalen}, 
\textbf{348} (2010), 
815-831.

\bibitem{Varchenko}
A.N. Varchenko, Zeta-function of 
monodromy and Newton's diagram, 
\textit{Invent. Math.}, 
\textbf{37} (1976), 253-262.

\bibitem{Zaharia}
A. Zaharia,  On the bifurcation set of a polynomial 
function and Newton boundary II, 
\textit{Kodai Math. J.}, 
\textbf{19} (1996), 218-233.



\end{thebibliography}
\end{document}